\newtheorem{Thm}{Theorem}[section]
\newtheorem{Prop}[Thm]{Proposition}
\newtheorem{Lem}[Thm]{Lemma}
\newtheorem{Cor}[Thm]{Corollary}
\newtheorem{Thmint}{Theorem}[section]
\theoremstyle{definition}
\newtheorem{Rem}[Thm]{Remark}
\newtheorem{Def}[Thm]{Definition}
\newcommand{\Cs}{C$^\ast$}
\newcommand{\id}{\mbox{\rm id}}
\newcommand{\rc}{\mathop{\rtimes _{\mathrm r}}}
\newcommand{\votimes}{\mathop{\bar{\otimes}}}
\newcommand{\bigvotimes}{\mathop{\bar{\bigotimes}}}
\newcommand{\vnc}{\mathop{\bar{\rtimes}}}
\newcommand{\wrr}[1]{\mathop{\wr _{#1}}}
\newcommand{\vwrr}[1]{\mathop{\bar{\wr}_{#1}}}
\newcommand{\vwr}{\mathop{\bar{\wr}}}
\DeclareMathOperator{\Wso}{{\rm W}^\ast}
\DeclareMathOperator{\stcl}{strong-closure}
\DeclareMathOperator{\bigfp}{\lower0.25ex\hbox{\LARGE $\ast$}}
\title[Rigid sides of simple AFD algebras]{Rigid sides of approximately finite dimensional simple operator algebras in non-separable category}
\author{Yuhei Suzuki}
\subjclass[2000]{Primary~46L05, 46L10}
\keywords{Non-separable operator algebras, Popa's orthogonality method, approximate finiteness, tensorially primeness}
\address{Graduate school of mathematics, Nagoya University, Chikusaku, Nagoya, 464-8602, Japan}
\email{yuhei.suzuki@math.nagoya-u.ac.jp}
\begin{document}
\begin{abstract}
Applying Popa's orthogonality method to a new class of groups,
we construct amenable group factors which are prime
and have no infinite dimensional regular abelian $\ast$-subalgebras.
By adjusting Farah--Katsura's solution of Dixmier's problem to the von Neumann
algebra setting, we obtain the first examples of prime
AFD factors and tensorially prime simple AF-algebras.
Our results are proved in ZFC, thus in particular
answering questions asked by Farah--Hathaway--Katsura--Tikuisis.
We also directly determine central sequences of certain crossed products.
This concludes the failure of the Kirchberg $\mathcal{O}_\infty$-absorption
theorem in the non-separable setting.
\end{abstract}
\maketitle
\section{Introduction}\label{section:intro}
The fundamental interest of operator algebra theory is on separable operator algebras.
Nevertheless, non-separable operator algebras
naturally arise even if one is only interested in countable objects.
For instance, they arise as ultraproducts, central sequence algebras, injective envelopes \cite{Ham79}, \cite{Ham85}, (generalized) Calkin algebras, uniform Roe algebras \cite{Roe93}, and so on.
Also, some set theorists and logicians care about problems on non-separable operator algebras as they often involve statements independent of ZFC and thus provide interesting sources to study axioms beyond ZFC (see e.g., \cite{Far}, \cite{Wea}).

The purpose of the present paper is to clarify
that even in the tamest class of simple operator algebras, that is, the approximately finite dimensional case,
non-separable operator algebras can be surprisingly rigid,
contrary to softness of simple separable amenable operator algebras (see e.g., \cite{Con}, \cite{Win}).

Throughout the paper, we say that a von Neumann algebra is {\it AFD}
(approximately finite dimensional)
if it admits an increasing net of finite dimensional $\ast$-subalgebras
whose union is strongly dense.
This corresponds to Murray and von Neumann's approximate finiteness (C)
introduced in \cite{MvN}.
Similarly, we say a \Cs-algebra is {\it AF} (approximately finite dimensional)
if there is an increasing net of finite dimensional $\ast$-subalgebras
with the norm dense union.
In this paper, we exclude finite dimensional algebras from these classes.

In \cite{FHKT}, Farah--Hathaway--Katsura--Tikuisis studied non-separable approximately finite dimensional operator algebras.
{\it Under the continuum hypothesis}, they have constructed
an AFD type II$_1$ factor and a simple unital AF-algebra
whose central sequences are all trivial.
This concludes that even for approximately finite dimensional algebras,
in the non-separable category, the tensor product structure
of approximately finite dimensional operator algebras is not necessary flexible unlike the separable case.
(We recall that in the separable case, any AFD factor tensorially absorbs the AFD type II$_1$ factor,
and any simple AF-algebra tensorially absorbs the Jiang--Su algebra $\mathcal{Z}$ \cite{JS}.)
The problem if these results are provable in ZFC was left open 
(\cite{FHKT}, Question 4.3).
In this paper, we settle these problems.
In fact, we solve their questions in the following stronger form.

\begin{Thmint}[Corollaries \ref{Cor:primeAFD} and \ref{Cor:primeAF}]\label{Thmint:prime}
The following statements hold true in ZFC.
\begin{enumerate}\upshape
\item There is a prime AFD type II$_1$ factor without infinite dimensional regular abelian $\ast$-subalgebras.
\item There is a tensorially prime simple unital AF-algebra 
without infinite dimensional regular abelian $\ast$-subalgebras.
\end{enumerate}
\end{Thmint}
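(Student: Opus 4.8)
The plan is to obtain both items from one group-theoretic construction fed into Popa's orthogonality method, carried out so that every transfinite step is explicit and hence valid in ZFC. First I would build an uncountable amenable ICC group $G$ --- realized as an increasing union $G=\bigcup_{\alpha<\omega_1}G_\alpha$ of countable locally finite ICC blocks amalgamated by a malnormality-type relation --- designed to satisfy two combinatorial rigidity properties: (a) any two elementwise commuting infinite subgroups of $G$ generate a subgroup of infinite index, and (b) no infinite abelian subgroup of $G$ has finite index in its normalizer. Local finiteness of the blocks makes $G$ amenable, so $L(G)$ is an AFD type II$_1$ factor, while the finite subgroups supply the approximating matrix subalgebras needed at the C*-level.

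For primeness, suppose toward a contradiction that $L(G)\cong M\votimes N$ with $M$ and $N$ both infinite dimensional. Since $M$ and $N$ commute, Popa's orthogonality method --- after a standard separability and maximality reduction --- should produce two commuting infinite subgroups $H,K\subset G$ whose group von Neumann algebras absorb $M$ and $N$ and with $\langle H,K\rangle$ of finite index in $G$. Property (a) forbids exactly this, giving the contradiction. The place where non-separability is indispensable is precisely here: for separable amenable factors Connes' theorem collapses everything to the non-prime hyperfinite factor $R$, whereas the uncountable combinatorics encoded in (a) survives the passage to the weak completion.

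The absence of infinite dimensional regular abelian $\ast$-subalgebras is handled similarly. If $A\subset L(G)$ were abelian, regular and infinite dimensional, then its normalizer generates $L(G)$, and Popa's orthogonality analysis of the associated normalizing groupoid should yield an infinite abelian subgroup of $G$ that is regularly embedded, i.e.\ of finite index in its normalizer --- contradicting property (b). This establishes (1). For (2) I would pass to a simple unital AF-algebra modeled on the same block filtration; the one genuinely new difficulty is to secure simplicity together with the AF property without invoking CH, and here I would adapt Farah--Katsura's solution of Dixmier's problem, replacing their CH-dependent bookkeeping by a canonical recursion that at each stage kills potential proper ideals and tensor splittings while keeping the algebra locally finite dimensional. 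Tensorial primeness then follows from the C*-analogue of the orthogonality argument above.

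The main obstacle I anticipate is the primeness step for an amenable factor. Every standard route to primeness --- Ozawa's solidity, Popa's deformation/rigidity --- either requires non-amenability or is vacuous for injective algebras, so the orthogonality method must be pushed to reduce a genuine von Neumann tensor splitting to the purely group-theoretic configuration ruled out by (a). Verifying that this reduction goes through in the amenable setting, and that a group with properties (a) and (b) can be built by an explicit CH-free recursion, is where I expect essentially all of the technical effort --- and the real departure from the separable theory --- to lie.
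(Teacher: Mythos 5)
Your overall strategy---an uncountable amenable group built by transfinite recursion with a malnormality-type property, fed into Popa's orthogonality method, plus a Farah--Katsura-style approximation argument for the AFD/AF side---is the right one, and it is essentially what the paper does. But the core reduction you propose does not work as stated. Popa's orthogonality method does not extract \emph{subgroups} of $G$ from von Neumann subalgebras of $L(G)$: given a splitting $L(G)\cong M\votimes N$ there is no mechanism producing commuting infinite subgroups $H,K\subset G$ with $L(H)\supset M$, $L(K)\supset N$ (indeed, in the separable amenable case every tensor splitting of $R$ exists with no group-theoretic shadow whatsoever). Likewise a regular abelian $\ast$-subalgebra need not come from an abelian subgroup, so your property (b) attacks the wrong object. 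The paper's actual key configuration is different and lives one level up: (i) the filtration $G=\bigcup_{\beta<\omega_1}G_\beta$ has \emph{uncountable cofinality}, so the Fourier support of any countable subset of $L(G)$ lands in a single $L(G_\beta)$ with $\beta<\omega_1$; and (ii) each $G_\beta$ is malnormal in $G$ ($G_\beta\cap gG_\beta g^{-1}=\{e\}$ for $g\notin G_\beta$, Lemma \ref{Lem:disjoint}), so Popa's Proposition 4.1 forces $\mathcal{N}_{L(G)}(B)''\subset L(G_\beta)$ for every diffuse countably generated $B\subset L(G_\beta)$. Primeness then follows by choosing countably generated diffuse $B_i\subset M_i$ and trapping both $M_1\subset\mathcal{N}(B_2)''$ and $M_2\subset\mathcal{N}(B_1)''$ inside a proper $L(G_\beta)$; the regular abelian case is handled the same way after Lemma \ref{Lem:diffusevn}. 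Your properties (a) and (b), even if you could arrange them, would still leave the passage from subalgebras to subgroups unbridged.

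A second, smaller conflict: you want the blocks to be locally finite (to get AFD-ness and the matrix approximants for free), but the malnormality computation in the wreath-product construction requires the groups to be \emph{torsion-free} (Lemma \ref{Lem:disjoint}), and a nontrivial torsion-free group is never locally finite. The paper resolves this by taking torsion-free amenable (not locally finite) blocks, so that $L(G)$ is injective by amenability, and then proving separately (Proposition \ref{Prop:AFD}, the von Neumann analogue of Farah--Katsura's Proposition 5.6, via Connes's theorem applied to a $\sigma$-complete chain of separable subfactors) that an injective II$_1$ factor of density character $\aleph_1$ is AFD. This also dissolves the difficulty you flag for item (2): the AF algebra is simply the norm closure of the union of the resulting net of full matrix subfactors, which is automatically simple, unital, strongly dense in $L(G)$ (hence tensorially prime), and inherits the absence of regular abelian subalgebras via Proposition \ref{Prop:regular}; no extra CH-free bookkeeping to ``kill ideals'' is needed, and Farah--Katsura's Proposition 5.6 was never CH-dependent to begin with (only the FHKT counterexample construction was).
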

We notice that the obstruction on regular abelian $\ast$-subalgebras
yields that these operator algebras cannot be realized
as a (twisted) groupoid algebra of an \'{e}tale groupoid $G$ with $G^{(0)}$ an infinite set.
See Section \ref{subsection:regular} and references given there for more details.
We also note that R{\o}rdam \cite{Ror} has constructed examples of simple unital nuclear
tensorially prime \Cs-algebras.
More precisely, he has constructed simple unital (separable) nuclear \Cs-algebras
which are neither stably finite nor purely infinite.
Tensorial primeness of such \Cs-algebras then follows from Kirchberg's dichotomy (see Theorem 4.1.10 of \cite{Ror02}).
To the author's knowledge, R{\o}rdam's examples
are the only previously known examples of tensorially prime simple nuclear \Cs-algebras.

To construct such algebras, we introduce a new class of groups
which we call {\it iterated wreath product groups} (along an ordinal).
We then apply Popa's orthogonality method \cite{Pop83} to von Neumann algebras of iterated wreath product groups to confirm the primeness and the absence of regular abelian $\ast$-subalgebras.
Then, to obtain the AFD condition of these group von Neumann algebras,
we extend Murray and von Neumann's theorem on the equivalence of
three approximate finiteness conditions for separable type II$_1$ factors
to type II$_1$ factors of density character $\aleph_1$ (Proposition \ref{Prop:AFD}). This is a von Neumann algebra analogue of Proposition 5.6 in \cite{FK}. By combining this result and Connes's classification theorem \cite{Con}, we obtain the desired AFD type II$_1$ factor and AF-algebra.

We also directly determine central sequences for certain crossed products of iterated wreath product groups.
This allows to answer Question 4.3 of \cite{FHKT} even in the class of purely infinite algebras.
In particular we conclude the failure of the Kirchberg $\mathcal{O}_\infty$-absorption theorem (\cite{KP}, Theorem 3.15) in the non-separable case.
Here we recall that the Kirchberg $\mathcal{O}_\infty$-absorption theorem
states that any simple separable nuclear purely infinite \Cs-algebra (called a Kirchberg algebra)
tensorially absorbs the Cuntz algebra $\mathcal{O}_\infty$.
Flexibility of the tensor product structure of amenable operator algebras, e.g., $\mathcal{O}_\infty$-stability, $\mathcal{Z}$-stability, the McDuff property, play a central role in the classification theory of operator algebras (see e.g., \cite{Con}, \cite{Win}).
\begin{Thmint}[Corollary \ref{Cor:NSKT}, Remark \ref{Rem:Cartan}]\label{Thmint:central} 
The following statements hold true in ZFC.
\begin{enumerate}\upshape
\item There is a simple unital AF-algebra
with no non-trivial central sequences.
\item There is an AFD type II$_1$ factor
with no non-trivial central sequences.
\item There is an AFD type III factor with no non-trivial central sequences.\item There is a simple unital nuclear purely infinite \Cs-algebra
with no non-trivial central sequences.
\end{enumerate}
Moreover, one can construct these algebras as the reduced crossed product
of a group action on an abelian operator algebra.
\end{Thmint}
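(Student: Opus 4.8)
The plan is to realize all four algebras by a single mechanism: as a reduced crossed product $A \rc \Gamma$ of one fixed iterated wreath product group $\Gamma$, built along $\omega_1$, acting on a suitable abelian operator algebra $A$, with the four types obtained by varying only the $\Gamma$-space while keeping $\Gamma$ and its rigidity fixed. The heart of the matter is the group-theoretic mechanism that forces central sequences to be trivial, and that replaces the continuum-hypothesis diagonalization of \cite{FHKT} by a ZFC argument. An iterated wreath product along $\omega_1$ is an increasing union $\Gamma=\bigcup_{\alpha<\omega_1}\Gamma_\alpha$ in which the step from level $\alpha$ to level $\alpha+1$ adjoins a group $K_\alpha$ that conjugates the copy of $\Gamma_\alpha$ onto pairwise commuting, $L^2$-orthogonal copies of itself. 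Because $\omega_1$ has uncountable cofinality, any countable sequence $(x_n)_{n\in\mathbb N}$ in $L(\Gamma)$ (or in $A\rc\Gamma$) is, up to arbitrarily small error, supported below a single level $\alpha<\omega_1$; conjugating by one element of $K_\alpha$ then carries that support onto an orthogonal copy, so asymptotic $K_\alpha$-invariance forces $(x_n)$ to be asymptotically scalar. Thus uncountable cofinality traps a central sequence below a fixed level and collapses it, which is exactly what fails for the countable towers (such as the finitary symmetric group $S_\infty$) underlying the McDuff property, and it is the reason the conclusion holds in ZFC rather than under the continuum hypothesis.

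Concretely, the central step is a direct Fourier-analytic determination of the central sequence algebra of $A \rc \Gamma$ that incorporates the abelian direction. Given a bounded sequence $(x_n)$ with $\|[x_n,a]\|\to 0$ for every $a\in A$ and $\|[x_n,u_g]\|\to 0$ for every $g\in\Gamma$, I would expand $x_n=\sum_g E(x_n u_g^\ast)\,u_g$ via the canonical conditional expectation $E$ onto $A$. Asymptotic commutation with $A$ forces the Fourier coefficients supported off the fixed-point set of each $g$ to vanish in $L^2$-norm, while asymptotic commutation with the unitaries $u_g$, together with the orthogonal-copy movement of the previous paragraph, forces the surviving coefficient data to be asymptotically supported on the identity. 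One concludes that $(x_n)$ is asymptotically scalar, so there are no non-trivial central sequences. The reduced crossed product is essential here, since its canonical $A$-valued $L^2$-structure is what controls the individual Fourier coefficients and keeps the estimates uniform over the uncountably many generators of $\Gamma$.

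Granting triviality of central sequences at this general level, the four items follow by specializing $A$ and the action. For (1) I would take $\Gamma$ to be an iterated wreath product of finite groups, so that $\Gamma$ is locally finite, acting freely and minimally on a Cantor set $X$ whose transformation groupoid is AF; then $C(X)\rc\Gamma$ is a simple unital AF-algebra. For (2) take a free ergodic probability-measure-preserving action, so that $L^\infty(X,\mu)\rc\Gamma$ is a type II$_1$ factor, which is AFD by amenability of $\Gamma$ through Proposition \ref{Prop:AFD} and Connes's theorem \cite{Con}. For (3) replace the invariant measure by a properly non-singular action admitting no equivalent invariant measure, yielding an AFD type III factor. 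For (4) choose a minimal, essentially free and locally paradoxical action, so that the crossed product is simple, nuclear and purely infinite. Simplicity in every case reduces to minimality or ergodicity together with topological (respectively essential) freeness; this simultaneously exhibits $A$ as a regular abelian subalgebra, so the ``moreover'' clause is automatic and is recorded in Remark \ref{Rem:Cartan}, while the four statements are collected in Corollary \ref{Cor:NSKT}.

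The main obstacle I expect is the Fourier-coefficient collapse in the non-separable reduced crossed product: one must verify that asymptotic commutation with the uncountably many generators genuinely forces the countably-indexed coefficient data of a central sequence to vanish, which hinges on arranging $\Gamma$ so that no subgroup $\Gamma_\alpha$ of countable height carries a non-trivial almost-central element, and so that the orthogonal-copy structure survives after conjugation inside the full $\Gamma$. Establishing this ``residual movement at every level'' property in a form that is simultaneously compatible with local finiteness (for (1)), amenability (for (1)--(3)), and the paradoxicality needed for (4) is the delicate part, since the four $\Gamma$-spaces must all be fitted to one and the same rigid group $\Gamma$.
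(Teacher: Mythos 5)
You have the right skeleton --- iterated wreath products along $\omega_1$, uncountable cofinality trapping a central sequence below a level $\beta$, and conjugation by a next-level element moving the trapped data onto an ``orthogonal copy'' --- and your specialization scheme for the four items, together with the abelian realization, matches the paper's overall intent. But there is a genuine gap at the technical core. The paper does \emph{not} prove triviality of central sequences for a general reduced crossed product of $\Gamma$ on an abelian algebra; it proves it (Theorem \ref{Thm:nocentral}) only for the Bernoulli-type algebras $A \wr G = (\bigotimes_G A)\rc G$, and the proof uses the tensor-shift structure in an essential way: the central sequence lies in $A\wrr{G_\beta}G_\beta$ for some $\beta$, and for $g\in\Gamma_\beta\setminus\{e\}$ one has $gG_\beta\cap G_\beta=\emptyset$ and $gG_\beta g^{-1}\cap G_\beta=\{e\}$, so the conditional expectation $E^{\varphi}_{G_\beta,G_\beta}$ maps the conjugated subalgebra $A\wrr{gG_\beta}(gG_\beta g^{-1})$ into $\mathbb{C}$; the whole argument is then the one-line norm estimate $\|x_n-c_n\|=\|E(x_n)-E(u_gx_nu_g^\ast)\|\le\|x_n-u_gx_nu_g^\ast\|\to0$. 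Your plan to fix one $\Gamma$ and vary only the abelian $\Gamma$-space cannot invoke this: for a general free minimal action on $C(X)$ the coefficient algebra is globally invariant, nothing is carried onto an independent copy, and $C(X)$ itself may carry non-trivial central sequences. Your proposed substitute, a Fourier-coefficient collapse, is precisely the step you flag as the main obstacle, and it is a real one: in a reduced crossed product the coefficients $E(x_nu_g^\ast)$ do not control the operator norm, so $L^2$-vanishing of off-diagonal coefficients does not yield $\|x_n-\lambda_n\|\to0$, which is what the C$^\ast$-statement ``no non-trivial central sequences'' requires.

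For comparison, the paper's route is: prove the theorem for $A\wr G$ with $A$ an arbitrary unital \Cs-algebra (and for $(M,\varphi)\vwr G$), then take $A=\mathbb{M}_2(\mathbb{C})$ with $G$ locally finite for items (1)--(3), where the AF/AFD structure is exhibited directly by the finite-dimensional subalgebras $\mathbb{M}_2(\mathbb{C})\wr H_\lambda$ over finite subgroups $H_\lambda$ --- deliberately avoiding Connes's theorem and Proposition \ref{Prop:AFD}, which your item (2) reintroduces and which would in any case restrict you to density character $\aleph_1$ --- and take $A=B$ a Kirchberg algebra for (4), with Kishimoto's lemma giving simplicity and pure infiniteness. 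The ``moreover'' clause is obtained a posteriori (Remark \ref{Rem:Cartan}) by rewriting $\mathbb{M}_2(\mathbb{C})\cong C(\mathbb{Z}_2)\rc\mathbb{Z}_2$ and $B\cong C(X)\rc\Gamma$, whence $B\wr G\cong C(X^G)\rc(\Gamma\wr G)$: the group acting on the abelian algebra is the larger wreath product $\Gamma\wr G$, not $G$ itself. To salvage your plan you would have to build this Bernoulli independence into each of your four actions, which brings you back to exactly this construction.
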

By the last statement, algebras in Theorem \ref{Thmint:central} can be non-isomorphic
to algebras in Theorem \ref{Thmint:prime}.
Furthermore, finite dimensional approximations
and AF-algebras in Theorem \ref{Thmint:central}
are directly given. In particular we avoid Connes's classification theorem \cite{Con}
and Proposition \ref{Prop:AFD}.
We remark that a unital \Cs-algebra with no non-trivial central sequences
cannot have non-trivial separable tensor product components.
See Section \ref{subsection:central} for details.

We also construct tensorially prime simple unital \Cs-algebras of decomposition rank one. Here we do not review the decomposition rank of \Cs-algebras. We refer the reader to \cite{KW} for the definition of decomposition rank
and \cite{Win} for its important role in the classification theory of \Cs-algebras.
\begin{Thmint}[Theorem \ref{Thm:dr1}]\label{Thmint:dr1}
There is a tesorially prime simple unital \Cs-algebra of decomposition rank one with no non-trivial central sequences.
\end{Thmint}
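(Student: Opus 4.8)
The plan is to realise the desired algebra as a reduced crossed product $A = C(X)\rc\Gamma$, where $\Gamma$ is one of the amenable iterated wreath product groups used in Theorems \ref{Thmint:prime} and \ref{Thmint:central}, but where $X$ is now a \emph{one-dimensional} compact (necessarily non-metrizable) Hausdorff $\Gamma$-space, rather than the zero-dimensional Cantor-type base underlying the AF-algebra of Theorem \ref{Thmint:central}. The point is a clean division of labour: all rigidity — simplicity, the absence of non-trivial central sequences, and tensorial primeness — is to be extracted from the combinatorics of $\Gamma$ exactly as before, while the covering dimension of $X$ is what pins the decomposition rank to the value one. Concretely I would build $X$ as a transfinite inverse limit of one-dimensional building blocks (arcs, or circles) indexed along the same ordinal that governs the wreath iteration: starting from the zero-dimensional minimal $\Gamma$-system of Theorem \ref{Thmint:central}, I would replace its branching finite fibres by intervals or circles, so that the dynamics, and hence the group-theoretic rigidity, is preserved while $\dim X = 1$. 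Unitality of $A$ is then immediate since $X$ is compact, and simplicity follows from topological freeness together with minimality of the $\Gamma$-action, via the same appeal to essential freeness of reduced crossed products as in Theorem \ref{Thmint:central}; the blow-up is designed precisely so that freeness and minimality survive the passage to the one-dimensional base.

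Next comes the decomposition rank, which splits into an upper and a lower estimate. For $\mathrm{dr}(A)\le 1$ I would use amenability of $\Gamma$ (iterated wreath products of amenable groups remain amenable, by transfinite induction through increasing unions and stability of amenability under wreath products) to produce, from F\o lner approximations of $\Gamma$, finite-dimensional subalgebras that are approximately central and approximately absorb any prescribed finite set; combining these with the fact that $\dim X = 1$, which permits multiplicity-two refinements of finite open covers of $X$, I would assemble two-coloured completely positive contractive approximations $(F_0\oplus F_1,\psi,\varphi)$ with each $\varphi|_{F_i}$ of order zero, giving $\mathrm{dr}(A)\le 1$. Equivalently, one may present $A$ as a transfinite inductive limit of matrix algebras over the interval or the circle — each of decomposition rank one — and invoke lower semicontinuity of decomposition rank along inductive limits. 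For the matching bound $\mathrm{dr}(A)\ge 1$ it suffices to show that $A$ is \emph{not} AF, since decomposition rank zero characterises AF-algebras; I would obtain this by exhibiting a unitary in $A$ with non-trivial $K_1$-class, inherited from the one-dimensional fibres (a winding unitary over a circle building block), and recalling that $K_1$ vanishes for AF-algebras.

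It remains to transport the rigidity. The absence of non-trivial central sequences should follow essentially verbatim from the direct central-sequence computation of Theorem \ref{Thmint:central}, since that argument exploits only the combinatorial dynamics of $\Gamma$ and not the dimension of the base: one checks that a central sequence in $C(X)\rc\Gamma$ is forced, layer by layer along the ordinal, to be asymptotically scalar. Tensorial primeness, and a fortiori the absence of infinite-dimensional regular abelian $\ast$-subalgebras, should follow from Popa's orthogonality method as in Theorem \ref{Thmint:prime}, applied to the canonical abelian subalgebra $C(X)$ and its $\Gamma$-translates. The main obstacle I anticipate is the simultaneous control of \emph{all} of these properties on a single one-dimensional model: one must engineer the one-dimensional $\Gamma$-space $X$ so that the action stays free and minimal, the $K_1$-class survives to block AF-ness, and yet the one-dimensional fibres introduce no new central sequences and do not interfere with the orthogonality estimates. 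In particular the upper bound $\mathrm{dr}(A)\le 1$ is the most delicate point, since the standard decomposition-rank estimates for crossed products are available only in the separable setting and must be reworked directly, through explicit two-coloured approximations built from F\o lner sets and multiplicity-two covers, in the non-separable category.
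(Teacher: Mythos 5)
There is a genuine gap, and it begins with the basic design decision. You locate the source of ``decomposition rank one'' in the covering dimension of a one-dimensional base space $X$, but the paper's example is $\mathbb{M}_2(\mathbb{C})\wr G$ --- the same non-commutative Bernoulli shift used for the AF examples, whose underlying Cartan subalgebra (via $\mathbb{M}_2(\mathbb{C})\cong C(\mathbb{Z}_2)\rtimes\mathbb{Z}_2$) still sits over a \emph{zero-dimensional} space. The obstruction to decomposition rank zero is K-theoretic, not topological: the group is rigged with $\Gamma_0=\mathbb{Z}$ so that $G$ splits as $H\rtimes\mathbb{Z}$, and the Pimsner--Voiculescu sequence forces $K_1(\mathbb{M}_2(\mathbb{C})\wr G)\neq 0$, ruling out AF. Your plan to get the same $K_1$-obstruction from winding unitaries over circle fibres is the right instinct for the lower bound, but the construction of the one-dimensional minimal free $\Gamma$-space carrying it is never specified, and it faces a real obstruction: the central-sequence argument (Theorem \ref{Thm:nocentral}) is not merely ``combinatorial dynamics of $\Gamma$'' --- it uses the Bernoulli structure $\bigotimes_G A$ and the conditional expectations $E^{\varphi}_{H,S}$ that slice off tensor factors. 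If you realize $X$ as a product $Y^G$ with $\dim Y=1$ to keep that machinery, then $\dim X=\infty$ (as for the Hilbert cube), and if you take some other one-dimensional $X$, the layer-by-layer expectation argument is gone and you have no replacement.

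The second gap is the upper bound $\mathrm{dr}\le 1$. Producing two-coloured order-zero approximations ``directly from F\o lner sets and multiplicity-two covers'' is not a reworking of a known argument: the available dimension estimates for crossed products by amenable groups yield bounds of the shape $2(\dim X+1)(d+1)-1$ and nothing close to $1$, and no Rokhlin-tower technology is cited or developed here for actions of uncountable iterated wreath products. The paper instead writes the algebra as an increasing union of \emph{separable} pieces $\mathbb{M}_2(\mathbb{C})\wrr{K}L$ with $L\subset K$ countable and $[K:L]=\infty$, shows each is monotracial and $\mathcal{Z}$-stable (Ozawa--R{\o}rdam--Sato, Sato), and then invokes Theorem F of \cite{STW} to get decomposition rank at most one for each piece, passing to the union by the local characterization of decomposition rank. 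Without either that classification input or a genuinely new non-separable approximation scheme, your upper bound does not go through. (Your sketches of simplicity, the central-sequence statement, and the Popa orthogonality argument for tensorial primeness are plausible in outline, but they all presuppose the unspecified space $X$, so they cannot be assessed independently of the first gap.)
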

We note that AF-algebras are of decomposition rank zero (see Examples 6.1 (i) in \cite{KW}).
Thus these (nuclear) \Cs-algebras have a different feature to those in Theorems \ref{Thmint:prime} and \ref{Thmint:central}.
To show tensorial primeness, we again apply Popa's orthogonality method.
\subsection*{Organization of the paper}
In Section \ref{section:pre}, we summarize basic facts and terminologies.
Besides from known facts, we include a new general statement on
finite GNS-completions of regular abelian $\ast$-subalgebras (Proposition \ref{Prop:regular}).
This in particular strengthens and unifies Corollaries 4.7 and 4.11 of \cite{LR} (see Corollary \ref{Cor:ssolid}). In Section \ref{section:iterated}, we introduce and study iterated wreath product groups. In Section \ref{section:AFD}, we establish the equivalence
of approximate finiteness (A), (B), (C) of Murray--von Neumann for type II$_1$ factors of density character $\aleph_1$. In Section \ref{section:prime},
we prove Theorem \ref{Thmint:prime}. In Section \ref{section:central},
we study central sequences of certein crossed products.
As a consequence, we obtain Theorem \ref{Thmint:central}.
In Section \ref{section:dr1}, by modifying the constructions and proofs in
Sections \ref{section:prime} and \ref{section:central}, we prove Theorem \ref{Thmint:dr1}.

\section{Preliminries}\label{section:pre}
Here we recall a few basic facts.
Our basic references on operator algebras are \cite{BO}, \cite{KR}.
We first fix notations which we frequently use.
\subsection*{Notations}
\begin{itemize}
\item  The symbols `$\votimes$' and `$\otimes$' stand for the von Neumann algebra tensor product and the minimal tensor product respectively.
\item The symbols `$\vnc$' and `$\rc$' stand for the von Neumann algebra crossed product and the reduced \Cs-crossed product respectively.
\item For a subset $S$ of a von Neumann algebra $M$,
denote by $\Wso(S)$ the von Neumann subalgebra of $M$ generated by $S$.
\item For a von Neumann algebra $M$, $U(M)$ denotes the unitary group of $M$.
\item We denote the unit element of a group by $e$.
(The ambient group is always clear from the context.) 
\end{itemize}
\subsection{Ordinals}
For the definition and basic facts on (von Neumann) ordinals, see Chapter 1 of \cite{Kun} for instance.
Here we only recall a few basic terminologies which we frequently use.
As usual, denote by $\omega_1$ the smallest uncountable ordinal.
Denote by $\aleph_0$ and $\aleph_1(= \omega_1)$ the countable infinite cardinal and the smallest uncountable cardinal respectively.
For each ordinal $\alpha$, there is at most one ordinal $\beta$
with $\alpha= \beta+1$.
If such $\beta$ exists, $\alpha$ is said to be a successor ordinal,
and $\beta$ is said to be the predecessor of $\alpha$.
Otherwise $\alpha$ is said to be a limit ordinal.
A limit ordinal $\alpha$ is said to be of {\it uncountable cofinality}
if there is no countable subset $S$ in $\{ \beta : \beta < \alpha\}$
satisfying $\sup(S) =\alpha$.
For instance, $\omega_1$
is of uncountable cofinality, as the union of countably many countable sets is again countable.
\subsection{Tensorial primeness}
A von Neumann algebra $M$ is
said to be {\it prime} if for any isomorphism $M \cong M_1 \votimes M_2$,
one of $M_i$ is type $I$.
Similarly, a \Cs-algebra $A$ is said to be {\it tensorially prime}
if for any isomorphism $A \cong A_1 \otimes A_2$,
one of $A_i$ is a type $I$ \Cs-algebra.
(Since the term `primeness' is reserved for another property of \Cs-algebras,
we use this terminology.)
It is clear that any strongly dense \Cs-subalgebra of a prime type II$_1$ factor is tensorially prime.

\subsection{Normalizers and regularity for subalgebras}\label{subsection:regular}
Throughout the paper, we only consider nonzero subalgebras.
For a $\ast$-subalgebra $N\subset M$ of a von Neumann algebra $M$,
define the {\it normalizer} of $N$ in $M$ to be
\[\mathcal{N}_M(N):= \{ u\in U(M): u N u^\ast =N\}.\]
A $\ast$-subalgebra $N$ of $M$ is said to be {\it regular}
if it satisfies $\mathcal{N}_M(N)''=M$.
A typical regular abelian $\ast$-subalgebra arises from the crossed product construction.
Indeed, if $\Gamma$ is a group acting on an abelian von Neumann algebra $A$,
then the inclusion $A \subset A \vnc \Gamma$ is regular.

For a $\ast$-subalgebra $ A \subset B$ of a \Cs-algebra $B$,
define the {\it normalizer} of $A$ in $B$ to be
\[N_B(A) := \{ n\in B: n A n^\ast \cup n^\ast A n \subset A\}.\]
A \Cs-subalgebra $A$ of $B$ is said to be {\it regular} if $N_B(A)$ generates
$B$ as a \Cs-algebra.
We note that, in this case, $N_B(A)$ spans a dense subspace of $B$.
Regular abelian $\ast$-subalgebras naturally arise from the groupoid constructions.
More precisely, for an \'etale locally compact Hausdorff groupoid $G$ with a twist $\Sigma$,
the inclusions $C_0(G^{(0)}) \subset {\rm C}^\ast_{\rm r}(G, \Sigma)$ and
 $C_0(G^{(0)}) \subset {\rm C}^\ast(G, \Sigma)$ are regular (for their definitions, see Section 4 in \cite{Ren08}).
For the proof of this fact, see Corollary 4.9 in \cite{Ren08}.
(Note that the second countability assumption is not used for this conclusion, and the same proof works for
the full twisted groupoid \Cs-algebras.)
Thus the absence of regular abelian $\ast$-subalgebras
implies the absence of (twisted) groupoid \Cs-algebra realizations.
Obviously, the definitions of regularity as a \Cs- and von Neumann algebra
are very different. For subalgebras of von Neumann algebras, we always consider regularity in the von Neumann algebra sense.

We next record a few observations on regular abelian $\ast$-subalgebras.
The next lemma is standard and well-known.
Since the author does not know a reference for this result, we include a proof.
\begin{Lem}\label{Lem:diffusevn}
Let $A \subset M$ be a regular abelian $\ast$-subalgebra of a type II$_1$ factor $M$.
Then the strong closure of $A$ contains $1_M$ and is either diffuse or finite dimensional.
\end{Lem}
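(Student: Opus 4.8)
The plan is to work throughout with $B := \stcl(A)$, the strong closure of $A$, which is an abelian von Neumann subalgebra of $M$, and to isolate from regularity a single principle to be used repeatedly: \emph{every projection of $B$ that is fixed by the conjugation action of $\mathcal{N}_M(A)$ is scalar}. First I would check that $\mathcal{N}_M(A) \subseteq \mathcal{N}_M(B)$. Conjugation by a unitary $u$ is a normal $\ast$-automorphism of $M$, so it carries the strong closure of $A$ onto the strong closure of $uAu^\ast = A$, namely $B$; thus each $u \in \mathcal{N}_M(A)$ restricts to a $\ast$-automorphism of $B$. Consequently, if a projection $p \in B$ satisfies $upu^\ast = p$ for every $u \in \mathcal{N}_M(A)$, then $p$ commutes with each such $u$, hence with the von Neumann algebra $\mathcal{N}_M(A)'' = M$ that they generate; by factoriality $p \in Z(M) = \mathbb{C}1_M$.

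Applying this to the identity of $B$ yields the first assertion. Let $p = 1_B$ be the unit of the von Neumann algebra $B$, a projection in $M$. Since $uBu^\ast = B$ for $u \in \mathcal{N}_M(A)$, the units must agree, i.e.\ $upu^\ast = 1_{uBu^\ast} = 1_B = p$, so $p$ is normalizer-invariant and hence scalar. As $A \neq 0$ we have $p \neq 0$, whence $p = 1_M \in B$.

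For the dichotomy I would decompose the abelian algebra $B$ into its purely atomic and diffuse parts, letting $q \in B$ denote the supremum of all atoms (minimal projections) of $B$. Each $u \in \mathcal{N}_M(A)$ permutes the atoms of $B$ bijectively, since an automorphism of an abelian von Neumann algebra sends minimal projections to minimal projections; hence $uqu^\ast = q$, and by the principle above $q \in \{0,1_M\}$. If $q = 0$ then $B$ has no atoms and is diffuse. If $q = 1_M$ then $B$ is purely atomic, say with atoms $\{p_i\}_{i \in I}$ and $\sum_i p_i = 1_M$. I would then show the action on $\{p_i\}_{i\in I}$ is transitive: for an orbit $O$ the projection $q_O := \sum_{i \in O} p_i \in B$ is normalizer-invariant, hence $0$ or $1_M$, which forces $O = I$. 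Finally, the unique normalized trace $\tau$ of the type II$_1$ factor $M$ satisfies $\tau(up_iu^\ast) = \tau(p_i)$, so transitivity makes all atoms share a common trace $c > 0$; then $1 = \tau(1_M) = \sum_{i \in I} c$ forces $I$ to be finite, so $B$ is finite dimensional.

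The step I expect to be the crux is the transitivity argument. Being purely atomic does not by itself bound the number of atoms, since the traces $\tau(p_i)$ could in principle decay (e.g.\ geometrically) while summing to $1$; it is precisely the centrality of the orbit sums $q_O$, together with $\tau$-invariance under conjugation, that collapses the atoms into a single orbit of equal trace and thereby yields finiteness. The remaining ingredients—normality of conjugation (so that strong closures and hence the identity $1_B$ and the atomic support $q$ are transported correctly), and strong convergence of the orbit sums so that $q_O$ genuinely lies in $B$—are routine but worth stating explicitly, as they are what allow the single central-projection principle to be invoked uniformly for $1_B$, for $q$, and for each $q_O$.
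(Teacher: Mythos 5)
Your proof is correct and follows essentially the same route as the paper: normalizer-invariant projections are central, hence scalar by factoriality, applied first to the unit and then to orbits of minimal projections. The only cosmetic difference is that the paper deduces finiteness of the orbit of a minimal projection \emph{before} summing (from finiteness of $M$, since the orbit consists of mutually orthogonal equivalent projections), whereas you form the orbit sum as a strong limit and extract finiteness afterwards from the trace; both are valid.
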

\begin{proof}
By taking the strong closure if necessary, we may assume that $A$ is strongly closed in $M$.
We first show that $1_M \in A$.
Let $1_A$ denote the unit of $A$.
Then for any $u \in \mathcal{N}_M(A)$, $1_A= u1_A u^\ast$.
By the factoriality of $\mathcal{N}_M(A)''$($=M$), we have $1_A=1_M$.

Now assume we have a minimal projection $p$ in $A$.
Then for any $u \in \mathcal{N}_M(A)$,
$u p u^\ast$ is again a minimal projection in $A$.
Since $A$ is abelian, for any $u, v \in \mathcal{N}_M(A)$,
we have one of the relations $upu^\ast = vpv^\ast$ or $upu^\ast vpv^\ast=0$.
Then, since $M$ is finite,
the set
\[P_p:=\{ u p u^\ast : u \in \mathcal{N}_M(A)\}\]
is finite.
Define $r := \sum_{q \in P_p} q$.
Then for any $u\in\mathcal{N}_M(A)$, we have $uru^\ast =r$.
Hence $r \in A \cap \mathcal{N}_M(A)' = \mathbb{C}$.
Therefore $A$ is finite dimensional.
\end{proof}

We next prove the following general proposition.
This implies a \Cs-algebra analogue of Lemma \ref{Lem:diffusevn}.
We note that the last statement is shown in Lemma 4.10 of \cite{LR} (see also Corollary 4.11 in \cite{LR}) for Cantan subalgebras in separable \Cs-algebras. We point out that our proof does not rely on the groupoid realization theorem \cite{Ren08} unlike their proof.
\begin{Prop}\label{Prop:regular}
Let $C\subset A$ be a regular abelian $\ast$-subalgebra of a \Cs-algebra $A$.
Let $\pi \colon A \rightarrow \mathbb{B}(H)$
be a $\ast$-representation such that $\pi(A)''$ is finite.
Then $\pi(C)''$ is regular in $\pi(A)''$.
In particular, if $\pi(A)''$ is a type II$_1$ factor,
then $\pi(C)''$ is either diffuse or finite dimensional.
\end{Prop}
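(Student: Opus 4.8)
The plan is to pass to the von Neumann algebra level and reduce the regularity of $D := \pi(C)''$ in $M := \pi(A)''$ to a statement about individual normalizers. Since $C$ is regular in $A$, the set $N_A(C)$ spans a norm-dense subspace of $A$, and because $\pi(A)$ is $\sigma$-weakly dense in $M$, the set $\pi(N_A(C))$ is $\sigma$-weakly total in $M$; as $\mathcal{N}_M(D)''$ is a von Neumann algebra, it therefore suffices to prove $\pi(n) \in \mathcal{N}_M(D)''$ for every $n \in N_A(C)$. First I would record that each $x := \pi(n)$ satisfies $x D x^\ast \subseteq D$ and $x^\ast D x \subseteq D$: indeed $\pi(nCn^\ast) \subseteq \pi(C)$ and $\pi(n^\ast C n) \subseteq \pi(C)$, and the maps $d \mapsto x d x^\ast$ and $d \mapsto x^\ast d x$ are normal, so the inclusions extend from the $\sigma$-weakly dense $\pi(C)$ to $D$.

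Next I would run a polar decomposition argument inside the finite algebra $M$. Write $x = v h$ with $h = (x^\ast x)^{1/2}$ and $v$ the accompanying partial isometry, $v^\ast v = \supp(h)$. Since a regular subalgebra is non-degenerate, $D$ is unital with unit $1_M$, so applying $x^\ast D x \subseteq D$ to $1_M \in D$ gives $x^\ast x = \pi(n^\ast n) \in D$, whence $h \in D$ by functional calculus. Consequently $p := v^\ast v = \supp(h) \in D$ and, using $q = v v^\ast = v p v^\ast$ together with $v D v^\ast \subseteq D$, also $q \in D$; a direct computation with $h \in D$ then shows $v D v^\ast \subseteq D$ and $v^\ast D v \subseteq D$, i.e.\ $v$ is a $D$-normalizing partial isometry with source and range projections in $D$. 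At this point I would observe that, as $D$ is abelian and $1_M \in D$, every unitary of $D$ normalizes $D$, so $D \subseteq \mathcal{N}_M(D)''$; in particular $h \in \mathcal{N}_M(D)''$, and the problem is reduced to showing $v \in \mathcal{N}_M(D)''$.

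The crux is to promote the $D$-normalizing partial isometry $v$ to a $D$-normalizing unitary, and this is precisely where the finiteness of $M$ enters. For a faithful normal (center-valued, if $M$ is not a factor) trace one has $\tau(1_M - p) = \tau(1_M - q)$, so comparison theory provides a partial isometry with source $1_M - p$ and range $1_M - q$; the difficulty is to choose it so that the resulting unitary $u$ satisfies $u D u^\ast = D$, for then $v = u p \in \mathcal{N}_M(D)\cdot D \subseteq \mathcal{N}_M(D)''$. To arrange this I would use that $\ad(v)$ carries $p(D' \cap M)p$ onto $q(D' \cap M)q$, so that the $D$-module structure of $M$ over the spectrum of $p$ matches that over the spectrum of $q$; combining this multiplicity matching with a maximality argument and the equality of traces on the complementary projections, one builds the extension to respect $D$. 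I expect this last step---making the unitary extension normalize $D$ rather than merely exist---to be the main obstacle, and the only place where both finiteness and the abelianness of $D$ are used essentially.

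Finally, the displayed ``in particular'' is immediate. Once $D = \pi(C)''$ is known to be regular in $M = \pi(A)''$, if $M$ is a type II$_1$ factor then $D$ is a strongly closed regular abelian $\ast$-subalgebra of $M$, so Lemma \ref{Lem:diffusevn} applies verbatim and yields that $D$ is either diffuse or finite dimensional.
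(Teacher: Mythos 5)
Your reduction (it suffices to put $\pi(n)$ into $\mathcal{N}_M(D)''$ for each $n\in N_A(C)$), the observation that $h=(x^\ast x)^{1/2}=(x^\ast 1_M x)^{1/2}\in D$, and the verification that the polar part $v$ of $x$ satisfies $vDv^\ast\cup v^\ast Dv\subset D$ all coincide with the paper's proof. But the step you yourself flag as ``the main obstacle'' is a genuine gap, and the route you sketch for it would not work. You propose to extend the $D$-normalizing partial isometry $v$ to a $D$-normalizing unitary using the trace equality $\tau(1-p)=\tau(1-q)$ together with a ``multiplicity matching'' of the relative commutants $p(D'\cap M)p$ and $q(D'\cap M)q$. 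This data is insufficient: if $D$ is a \emph{singular} maximal abelian subalgebra of a II$_1$ factor, then every $D$-normalizing unitary lies in $D$ and hence fixes every projection of $D$, so for distinct projections $1-p\neq 1-q$ in $D$ of equal trace there is \emph{no} $D$-normalizing partial isometry with source $1-p$ and range $1-q$, even though all trace and relative-commutant invariants agree. No amount of abstract comparison theory can manufacture normalizing elements; they have to come from somewhere. The missing ingredient is a \emph{second} use of the regularity of $C$ in $A$: in the paper one takes a maximal partial isometry $u$ in the set of $D$-normalizing partial isometries $w$ with $wv^\ast v=v$ (Zorn), and if $u$ is not unitary, finiteness gives $1-u^\ast u\sim 1-uu^\ast$, hence the corner $(1-uu^\ast)M(1-u^\ast u)$ is nonzero; since $\pi(N_A(C))$ spans a strongly dense subspace of $M$, some $m\in\pi(N_A(C))$ has $x':=(1-uu^\ast)m(1-u^\ast u)\neq 0$, and because the cut-down projections lie in $D$ this $x'$ again normalizes $D$, so its polar part can be adjoined to $u$, contradicting maximality. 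Your outline never draws on $\pi(N_A(C))$ at this stage, and without it the extension cannot be built.

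A secondary, fixable error: ``a regular subalgebra is non-degenerate'' is false, so you cannot simply assert $1_M\in D$. For example $C=\mathbb{C}\oplus 0\subset\mathbb{C}^2=A$ has $N_A(C)=A$, hence is regular, yet is degenerate. The correct reduction (as in the paper) is to show that the unit $p$ of $D$ satisfies $(pn-pnp)^\ast(pn-pnp)=0$ and $(np-pnp)(np-pnp)^\ast=0$ for all $n\in\pi(N_A(C))$, so that $p$ is a central projection of $M$, and then to cut everything down to $Mp$, where regularity of $D\subset Mp$ is equivalent to regularity of $D\subset M$. Your final paragraph deducing the ``in particular'' clause from Lemma \ref{Lem:diffusevn} is correct.
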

\begin{proof}
Note that the last statement follows from
the first statement and Lemma \ref{Lem:diffusevn}.

Put $\mathcal{C}:= \stcl(\pi(C))$, $\mathcal{A}:= \pi(A)''$.
We first show that the unit $p$ of $\mathcal{C}$ commutes with $\mathcal{A}$.
For any $n\in \pi( N_A(C))$, as $n \mathcal{C} n^\ast \cup n^\ast \mathcal{C} n \subset \mathcal{C}$, direct computations imply
\[(pn -pnp)^\ast(pn -pnp) =0,~ (np -pnp)(np -pnp)^\ast=0.\]
This yields $p\in \mathcal{A} \cap \pi( N_A(C))'= \mathcal{A} \cap \mathcal{A}'$.
Thanks to the centrality of $p$ in $\mathcal{A}$,
the regularity of the inclusion $\pi(C)'' \subset \mathcal{A}$
is equivalent to the regularity of $\mathcal{C} \subset \mathcal{A}p$.
Hence, by restricting $\pi$ to $pH$ if necessary, we may assume
$1_H \in \mathcal{C}$.

We show that, for any $n\in \pi(N_A(C))$,
there is $u\in \mathcal{N}_{\mathcal{A}}(\mathcal{C})$
satisfying $n=u|n|$. Since $|n| =(n^\ast 1_H n)^{1/2} \in \mathcal{C}$ for all $n \in \pi(N_A(C))$, this implies the regularity of $\mathcal{C} \subset \mathcal{A}$.
To show the claim, we first show that for any $n \in \pi(N_A(C))$,
denoting by $n= v|n|$ the polar decomposition of $n$,
we have $v \mathcal{C} v^\ast \cup v^\ast \mathcal{C} v\subset \mathcal{C}$.
To see this, for $k\in \mathbb{N}$, define a Borel function $f_k \colon \mathbb{R} \rightarrow \mathbb{R}$ to be $f_k(t) = t^{-1}$ for $t\in [1/k, k]$ and $f_k(t)=0$ otherwise. Set $a_k:=f_k(|n|)$.
Then, since $a_k \in \mathcal{C}$, we have
\[na_k \mathcal{C} a_k n^\ast \subset \mathcal{C}.\]
As the bounded sequence $(n a_k )_{k=1}^\infty$ $\ast$-strongly converges to $v$, we have $v \mathcal{C} v^\ast \subset \mathcal{C}$.
By applying the same argument to $n^\ast$, we obtain the other inclusion.

Define the set
\[\mathcal{V}:= \{ w\in \mathcal{A}: w{\rm~is~a~partial~isometry}, w^\ast \mathcal{C} w \cup w \mathcal{C} w^\ast \subset \mathcal{C}, wv^\ast v=v\}.\]
Note that $v\in \mathcal{V}$ hence $\mathcal{V}$ is not empty.
We equip the set $\mathcal{V}$ with the partial order $\preceq$ defined as follows.
For two elements $w_1, w_2 \in \mathcal{V}$,
we declare $w_1 \preceq w_2$ if $w_2 w_1^\ast w_1 =w_1$.
It is not hard to see that $(\mathcal{V}, \preceq)$ satisfies the assumption of Zorn's lemma.

Choose a maximal element $u$ of $\mathcal{V}$.
We show that $u$ is a unitary element.
To lead to a contradiction, assume that $u$ is not a unitary element.
Since $\mathcal{A}$ is finite,
the projections $p:=1-u^\ast u$ and $q:= 1-uu^\ast$ are Murray--von Neumann equivalent in $\mathcal{A}$.
Since $\pi(N_A(C))$ spans a strongly dense subspace of $\mathcal{A}$,
one can choose $m\in \pi(N_A(C))$ satisfying
$x:=qmp \neq 0$.
Since both $p, q$ are in $\mathcal{C}$,
we have $x \mathcal{C} x^\ast \cup x^\ast \mathcal{C} x \subset \mathcal{C}$.
Let $x= w|x|$ be the polar decomposition of $x$.
By the argument in the second previous paragraph, we have $w \mathcal{C} w^\ast \cup w^\ast \mathcal{C} w \subset \mathcal{C}$.
Also, since $x= q xp$, we have $w=qwp$.
This implies $w u^\ast = w^\ast u =0$.
Hence $s:=u+w$ is a partial isometry in $\mathcal{A}$.
It is clear from the definition that $s u^\ast u= u$
and $s\neq u$.
Moreover, we have $w \mathcal{C}u^\ast = w u^\ast u \mathcal{C} u^\ast =0$
and similarly $w^\ast \mathcal{C} u= 0$.
Hence
\[s \mathcal{C} s^\ast \cup s^\ast \mathcal{C} s \subset \mathcal{C}.\]
This contradicts the maximality of $u$.
We thus conclude $u\in \mathcal{N}_{\mathcal{A}}(\mathcal{C})$.
Since $uv^\ast v= v$,
we obtain the desired presentation $n=u|n|$.
\end{proof}
As an immediate consequence of Proposition \ref{Prop:regular}, we obtain the following corollary.
This strengthens and unifies Corollaries 4.7 and 4.11 in \cite{LR} (cf.~Corollary B of \cite{OP2}, Theorem B of \cite{CS}).
We again emphasize that our proof does not involve groupoids.
We recall from \cite{OP}, \cite{OP2} that
a von Neumann algebra $M$ is said to be {\it strongly solid}
if for any diffuse amenable von Neumann subalgebra $A$ of $M$,
$\mathcal{N}_M(A)''$ is amenable.
\begin{Cor}\label{Cor:ssolid}
For any non-amenable strongly solid type II$_1$ factor,
all its strongly dense \Cs-subalgebras have no infinite dimensional regular abelian $\ast$-subalgebras.
\end{Cor}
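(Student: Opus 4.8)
The plan is to derive this as a direct consequence of Proposition \ref{Prop:regular} together with the definition of strong solidity, using a proof by contradiction. Let $M$ be a non-amenable strongly solid type II$_1$ factor, let $A \subset M$ be a strongly dense \Cs-subalgebra, and suppose toward a contradiction that $A$ contains an infinite dimensional regular abelian $\ast$-subalgebra $C$. Since $A$ is strongly dense in $M$, the identity representation of $A$ on the standard Hilbert space $H$ of $M$ satisfies $\pi(A)'' = M$, which is a type II$_1$ factor, hence in particular finite. This is exactly the hypothesis needed to invoke Proposition \ref{Prop:regular}.

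Next I would apply Proposition \ref{Prop:regular} to conclude that $\mathcal{C} := \stcl(\pi(C)) = \stcl(C)$ is regular in $M$, and moreover that $\mathcal{C}$ is either diffuse or finite dimensional. The task is then to rule out the finite dimensional alternative. Here I would use that $C$ is infinite dimensional as an abelian \Cs-algebra: its strong closure $\mathcal{C}$ in a type II$_1$ factor is an infinite dimensional abelian von Neumann algebra, and therefore cannot be finite dimensional. (More carefully, an infinite dimensional abelian \Cs-algebra has a strong closure that is not finite dimensional, since the finitely many minimal projections of a finite dimensional strong closure would already exhaust the spectral projections of $C$.) Hence $\mathcal{C}$ must be diffuse.

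Having established that $\mathcal{C}$ is a diffuse abelian von Neumann subalgebra of $M$, I would now feed it into the strong solidity hypothesis. Abelian von Neumann algebras are amenable, so $\mathcal{C}$ is a diffuse amenable von Neumann subalgebra of $M$. By the definition of strong solidity, $\mathcal{N}_M(\mathcal{C})''$ is amenable. On the other hand, the regularity of $\mathcal{C}$ in $M$ (just obtained from Proposition \ref{Prop:regular}) means precisely that $\mathcal{N}_M(\mathcal{C})'' = M$. Therefore $M$ is amenable, contradicting the assumption that $M$ is non-amenable. This contradiction shows that no infinite dimensional regular abelian $\ast$-subalgebra can exist, which is the claim.

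The main obstacle I anticipate is the bookkeeping around the passage from $C$ to its strong closure, specifically verifying that \textbf{regularity as a \Cs-subalgebra} of $A$ genuinely yields \textbf{regularity as a von Neumann subalgebra} of $M$ in the sense required by strong solidity — but this is exactly the content of Proposition \ref{Prop:regular}, so the real work has already been done. The only remaining subtlety is confirming that $C$ infinite dimensional forces its strong closure to be infinite dimensional (and hence diffuse rather than finite dimensional); this is a routine spectral-theory observation that I would state briefly rather than belabor.
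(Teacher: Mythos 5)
Your argument is correct and is exactly the intended one: the paper states this corollary as an immediate consequence of Proposition \ref{Prop:regular} without writing out the details, and your write-up supplies precisely those details (apply the proposition to the inclusion $A \subset M \subset \mathbb{B}(L^2(M))$, rule out the finite dimensional alternative since $C \subset \stcl(C)$, then feed the resulting diffuse regular abelian subalgebra into the strong solidity hypothesis to contradict non-amenability). No gaps.
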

\subsection{Popa's orthogonality method}
Recall that a von Neumann algebra $M$ is said to be {\it countably decomposable}
if there is no uncountable family consisting of pairwise orthogonal nonzero projections in $M$.
Throughout the paper, for a countably decomposable finite von Neumann algebra $M$, we fix a faithful normal tracial state $\tau$ (otherwise specified).
In most cases, $M$ is a type II$_1$ factor and thus the choice of $\tau$ is canonical.
Our analysis in Section \ref{section:prime} (thus Theorem \ref{Thmint:prime}) is based on {\it Popa's orthogonality method} \cite{Pop83}.
Here we briefly review this method and related terminologies.
In the seminal paper \cite{Pop83}, Popa has developed a powerful strategy to determine
the normalizers of operator subalgebras.
Among other things, his method provides the following striking results;
the first examples of prime type II$_1$ factors (e.g., uncountable free group factors),
the first examples of type II$_1$ factors with no infinite dimensional regular abelian $\ast$-subalgebras (e.g., uncountable free group factors, ultraproducts of type II$_1$ factors). For more applications, see the original article \cite{Pop83}.
His analysis of the normalizer is based on the orthogonality relation between
two von Neumann subalgebras.
Here recall from Definition 2.2 of \cite{Pop83} that
two $\ast$-subalgebras $A, B$ in $M$ are said to be {\it orthogonal},
denoted by $A \perp B$,
if they satisfy the following condition:
$\tau(a b) = \tau(a) \tau(b)$ for all $a\in A$ and $b\in B$.
This condition has a number of useful characterizations; see Lemma 2.1 in \cite{Pop83}.
\subsection{Density character}
The {\it density character} of a topological space $X$
is
the cardinal \[\min\{\sharp S: S \subset X {\rm ~dense}\}.\]
For von Neumann algebras, we always consider
the density character with respect to the $\sigma$-strong topology.
Note that the weak topology, strong topology, strong $\ast$-topology,
$\sigma$-weak topology, $\sigma$-strong $\ast$-topology
define the same density character as the $\sigma$-strong topology. This follows from
the proof of statement (1) below.
The following proposition clarifies aspects of the density character of von Neumann algebras.
For future reference, we record the proof.
\begin{Prop}\label{Prop:dc}
\begin{enumerate}[\upshape(1)]
\item The density character of $M$ is equal to
\[\min\{\sharp S:S\subset M {\rm~infinite~ subset}, \Wso(S)=M\}.\]
\item When $M$ is countably generated, the density character of $M$ coincides with that of the predual $M_\ast$ (with respect to the norm topology).
\item When $M$ is infinite dimensional, for any faithful normal state $\varphi$ on $M$, 
the density character of $M$ is equal to the dimension
of the GNS Hilbert space $L^2(M, \varphi)$.
\end{enumerate}
\end{Prop}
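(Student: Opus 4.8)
The plan is to prove the three statements in sequence, isolating one common mechanism: on norm-bounded subsets of $M$ the $\sigma$-strong topology is governed by a single $L^2$-norm, and this lets us transfer density statements between $M$, its $\ast$-algebraic generators, and a GNS space. Write $\mathrm{dens}(M)$ for the density character. For (1) let $\gamma$ denote the displayed minimum. The inequality $\gamma\le\mathrm{dens}(M)$ is immediate: a $\sigma$-strongly dense subset $D$ satisfies $\Wso(D)=M$, and (padding $D$ to be infinite without changing its cardinality) it witnesses the minimum. For the reverse, fix $S$ with $\Wso(S)=M$ and $\sharp S=\gamma$, and let $D$ be the set of $\ast$-polynomials in $S\cup\{1_M\}$ with coefficients in $\mathbb{Q}+i\mathbb{Q}$. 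Then $\sharp D\le\gamma$ (as $\gamma$ is infinite), and $D$ is norm-dense in the complex $\ast$-algebra generated by $S$, which is $\sigma$-strongly dense in $\Wso(S)=M$; since the norm topology is finer, $D$ is $\sigma$-strongly dense and $\mathrm{dens}(M)\le\sharp D\le\gamma$. Verbatim the same argument works with the weak, strong, strong-$\ast$, $\sigma$-weak, or $\sigma$-strong-$\ast$ topology in place of the $\sigma$-strong one, since each yields the same generated von Neumann algebra while $D$ remains norm-dense; this justifies the claim that all these topologies define the same density character.

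For (2) and (3) fix a faithful normal state $\varphi$ (which exists by the standing hypotheses), let $\Omega$ be the cyclic vector of $L^2(M,\varphi)$, and set $\|x\|_\varphi:=\|x\Omega\|_2=\varphi(x^\ast x)^{1/2}$. The key elementary fact is that on each ball $M_{\le r}$ the $\sigma$-strong topology coincides with that of $\|\cdot\|_\varphi$: for a bounded net, $\|x_i\|_\varphi\to 0$ forces $\|x_i y'\Omega\|_2=\|y'x_i\Omega\|_2\le\|y'\|\,\|x_i\|_\varphi\to 0$ for every $y'\in M'$, and $M'\Omega$ is dense because $\Omega$ is separating for $M$. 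Combined with the Kaplansky density theorem this gives the transfer principle used below: if $A_0\subset M$ is a $\sigma$-strongly dense $\ast$-subalgebra, then $A_0\Omega$ is $\|\cdot\|_2$-dense in $L^2(M,\varphi)$ (approximate each $x\in M$ $\sigma$-strongly by a bounded net in $A_0$ and apply $\Omega$); conversely, a $\|\cdot\|_\varphi$-dense subset of a ball is $\sigma$-strongly dense in that ball.

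Granting this, (3) is quick. Let $\kappa=\dim L^2(M,\varphi)$, which equals the density character of $L^2(M,\varphi)$ because $M$, hence $L^2(M,\varphi)$, is infinite dimensional. Starting from a $\sigma$-strongly dense set of cardinality $\mathrm{dens}(M)$ and passing to the Gaussian-rational $\ast$-algebra $A_0$ as in (1), the transfer principle makes $A_0\Omega$ dense in $L^2(M,\varphi)$, so $\kappa\le\mathrm{dens}(M)$. For the reverse inequality, write $M=\bigcup_{n\ge1}nM_{\le1}$; a $\|\cdot\|_2$-dense subset of $M_{\le1}\Omega$ of cardinality $\le\kappa$ lifts through $\Omega$ to a $\|\cdot\|_\varphi$-dense, hence $\sigma$-strongly dense, subset of $M_{\le1}$, and taking the union over $n$ of its scalings produces a $\sigma$-strongly dense subset of $M$ of cardinality $\le\aleph_0\cdot\kappa=\kappa$. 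Thus $\mathrm{dens}(M)=\kappa$.

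Finally (2) is the specialization $\kappa=\aleph_0$: when $M$ is countably generated it is $\sigma$-strongly separable by (1), so $L^2(M,\varphi)$ is separable by the transfer principle, whence $M$ admits a faithful normal representation on a separable Hilbert space $H$. The restriction map from the predual $\mathbb{B}(H)_\ast$ (the trace-class operators, separable for $H$ separable) onto $M_\ast$ is a norm-continuous surjection, so $M_\ast$ is separable, matching $\mathrm{dens}(M)=\aleph_0$. The genuine obstacle throughout is the bounded-set identification of the $\sigma$-strong topology with $\|\cdot\|_\varphi$ together with Kaplansky density; once that transfer principle is established, each estimate is routine bookkeeping with Gaussian-rational generating sets and countable exhaustions by balls, the only delicate point being to track when an infinite cardinal absorbs the factor $\aleph_0$ arising from polynomial degrees and from the union over ball radii.
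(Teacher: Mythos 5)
Your argument is correct, and for part (1) it is exactly the paper's argument (counting $\mathbb{Q}(i)$-coefficient $\ast$-polynomials and invoking the bicommutant theorem), but for (2) and (3) you take a genuinely different and more hands-on route built on a single transfer principle: on bounded subsets of $M$ the $\sigma$-strong topology agrees with the $\|\cdot\|_\varphi$-metric. For (3) the paper obtains $\mathrm{dens}(M)\le\dim L^2(M,\varphi)$ simply from the inclusion $M\subset\mathbb{B}(L^2(M,\varphi))$, which tacitly uses that $\mathbb{B}(H)$ has $\sigma$-strong density character $\dim H$ and that this bound passes to subalgebras; your lifting of a $\|\cdot\|_2$-dense subset of $M_{\le 1}\Omega$ back to $M_{\le 1}$ makes that step self-contained and quantitative, at the cost of setting up the $M'\Omega$-density and Kaplansky arguments. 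For (2) the paper proves the two inequalities abstractly --- $\mathrm{dens}(M)\le\mathrm{dens}(M_\ast)$ from the duality $(M_\ast)^\ast=M$, and the converse from Hahn--Banach density of $\{\varphi(x\,\cdot):x\in M\}$ in $M_\ast$ --- an argument that never actually uses countable generation; you instead exploit countable generation to force both cardinals to equal $\aleph_0$, via separability of $L^2(M,\varphi)$ and the surjectivity of the restriction map $\mathbb{B}(H)_\ast\to M_\ast$ (extension of normal functionals), which is equally valid but less general. One caveat you share with the paper: both proofs of (2) presuppose that $M$ admits a faithful normal state, i.e., that $M$ is countably decomposable, and this is not a consequence of countable generation alone (an injective bounded function on an uncountable index set $I$ with $\sharp I\le 2^{\aleph_0}$ singly generates $\ell^\infty(I)$, which has no faithful normal state); so the phrase ``by the standing hypotheses'' is doing real work and should be made explicit.
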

\begin{proof}[Proof $($outline$)$]
(1):
We observe that, for any infinite subset $S$ of $M$,
the $\ast$-subalgebra of $M$ generated by $S$ over the field $\mathbb{Q}(i)$
has the same cardinal as $S$.
This proves the claim.

(2): The duality $(M_\ast)^\ast =M$ shows that
the density character of $M$ is at most that of $M_\ast$.
To see the converse, take a faithful normal state $\varphi$ on $M$.
Then the Hahn--Banach theorem shows that
the set $\{\varphi(x \cdot): x\in M\}$ is dense in $M_\ast$.

(3): Since $M \subset \mathbb{B}(L^2(M, \varphi))$,
the density character of $M$ is at most the dimension of $L^2(M, \varphi)$.
The converse follows from the following observation.
For any strongly dense subset $S$ of $M$,
its canonical image in $L^2(M, \varphi)$ is dense.
\end{proof}

For \Cs-algebras, we consider the density character with respect to the norm topology.
Obviously the analogue of Proposition \ref{Prop:dc} (1) holds true in the \Cs-algebra context.
\subsection{Central sequences}\label{subsection:central}
For a unital \Cs-algebra $A$,
a bounded sequence $(x_n)_{n=1}^\infty$ of $A$ is said to be {\it central}
if it satisfies $\lim_{n \rightarrow \infty} \| x_ny -y x_n\| =0$ for all $y\in A$.
A central sequence $(x_n)_{n=1}^\infty$ in $A$ is said to be {\it trivial}
if there is a sequence $(\lambda_n)_{n=1}^\infty$ of scalars
satisfying $\lim_{n \rightarrow \infty} \| x_n -\lambda_n\|=0$.
We say that $A$ has {\it no non-trivial central sequences}
if all central sequences of $A$ are trivial.
We note that if a unital \Cs-algebra $A$ has no non-trivial central sequences, then $A$ has no separable tensor product factors other than full matrix algebras.
To see this, it suffices to show that any unital separable \Cs-algebra $B$
except full matrix algebras has a non-trivial central sequence.
The case $B$ is simple is shown in Proposition 2.10 of \cite{Kir}.
When $B$ is not simple, for a proper ideal $J$ of $B$,
a quasi-central approximate unit $(e_n)_{n=1}^\infty$ of $J$ in $B$
defines a non-trivial central sequence in $B$.

Similarly, for a von Neumann algebra $M$,
an (operator norm) bounded sequence $(x_n)_{n=1}^\infty$ of $M$
is said to be {\it central} if for any $y\in M$,
the sequence $(x_ny - y x_n)_{n=1}^\infty$ $\ast$-strongly converges to $0$.
A central sequence $(x_n)_{n=1}^\infty$ in $M$ is said to be {\it trivial}
if there is a sequence $(\lambda_n)_{n=1}^\infty$ of scalars
satisfying $x_n - \lambda_n \rightarrow 0$ $\ast$-strongly as $n$ tends to infinity.
We say that $M$ has {\it no non-trivial central sequences}
if all central sequences of $M$ are trivial.

We remark that, for von Neumann algebras, the definitions of centrality and triviality in the von Neumann and \Cs-algebra sense are very different.
In this paper, for von Neumann algebras, we always understand
these terminologies in the von Neumann algebra sense.
\section{Iterated wreath product groups and their basic properties}\label{section:iterated}

Throughout the paper, we assume that groups are discrete.
In this section, we introduce a new class of groups which we call {\it iterated wreath product groups}.
We also study their basic properties which we use.

We first recall the wreath product construction.
For two groups $\Gamma$ and $\Lambda$,
the wreath product group $\Gamma \wr \Lambda$
is given by
$(\bigoplus_{\Lambda} \Gamma) \rtimes_\sigma \Lambda$.
Here $\sigma \colon \Lambda \curvearrowright \bigoplus_{\Lambda} \Gamma$
denotes the left shift action.
We identify $\Lambda$ with the subgroup of $\Gamma \wr \Lambda$
via the canonical embedding $ \Lambda \ni g \mapsto (e, g) \in \Gamma \wr \Lambda$.

The main idea of the construction is to iterate
the wreath product construction along an ordinal.
More precisely, we construct a new group as follows.
Given an ordinal $\alpha$ and
a family $(\Gamma_\beta)_{\beta < \alpha}$ of groups.
We then define an increasing family $(G_\beta)_{\beta \leq \alpha}$
of groups as follows.
We first set $G_0 := \{ e \}$ (the trivial group).
Assume that the increasing family $(G_\gamma)_{\gamma<\beta}$
has been defined for an ordinal $\beta \leq \alpha$.
We then define $G_\beta$ as follows.
When $\beta$ is a limit ordinal,
we define $G_\beta := \bigcup_{\gamma < \beta} G_\gamma$.
When $\beta$ has the predecessor $\eta$,
we define
$G_\beta := \Gamma_\eta \wr G_\eta$.
This process extends the family
$(G_\gamma)_{\gamma<\beta}$ to $(G_\gamma)_{\gamma<\beta+1}$.
By the transfinite induction, we obtain the increasing family
$(G_\beta)_{\beta \leq \alpha}$ of groups.
We refer to the group $G:=G_\alpha$ as the {\it iterated wreath product of
$(\Gamma_\beta)_{\beta<\alpha}$}.
Throughout the paper, for $\beta<\alpha$, we denote by $G_\beta$ the subgroup of $G$ obtained in the construction of $G$ at the $\beta$-th step.

We next define the degree for elements in $G$.
For $g\in G$,
we define \[\deg(g):= \min \{ \beta \leq \alpha: g\in G_\beta\}.\]
By the definition of $(G_\beta)_{\beta \leq \alpha}$,
$\deg(g)$ must be a successor ordinal.

We now summarize basic properties of iterated wreath product groups.

The following lemma is useful to study properties of iterated wreath product groups.
Recall that a group is said to be {\it locally finite}
if it is a directed union of finite subgroups.
\begin{Lem}\label{Lem:lf}
Let $\alpha$ be an ordinal. Let $\mathcal{C}$ be a class of groups
closed under taking wreath products, isomorphisms, and directed unions
$($e.g., the class of amenable groups, the class of locally finite groups, the class of torsion-free groups$)$.
Let $(\Gamma_\beta)_{\beta < \alpha}$ be a family of groups in $\mathcal{C}$.
Then the iterated wreath product $G$ of $(\Gamma_\beta)_{\beta < \alpha}$ is contained in $\mathcal{C}$.
\end{Lem}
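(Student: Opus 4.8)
The plan is to prove this by transfinite induction on $\beta \leq \alpha$, showing at each stage that $G_\beta \in \mathcal{C}$. The three closure hypotheses on $\mathcal{C}$ — closure under wreath products, isomorphisms, and directed unions — are tailored exactly to the three cases that appear in the transfinite construction of $G_\beta$, so the induction should go through essentially by unwinding the definitions.

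First I would set up the induction with the claim $P(\beta)$: ``$G_\beta \in \mathcal{C}$'' for all $\beta \leq \alpha$. The base case is $\beta = 0$, where $G_0 = \{e\}$ is the trivial group; since $\mathcal{C}$ is closed under wreath products and is nonempty (it contains each $\Gamma_\beta$), it must contain the trivial group, which can be seen, e.g., as the directed union of the empty family or as a trivial wreath product. For the successor case, suppose $\beta = \eta + 1$ and $G_\eta \in \mathcal{C}$ by the inductive hypothesis. By construction $G_\beta = \Gamma_\eta \wr G_\eta$, and since $\Gamma_\eta \in \mathcal{C}$ (it is one of the given groups) and $G_\eta \in \mathcal{C}$, closure of $\mathcal{C}$ under wreath products gives $G_\beta \in \mathcal{C}$. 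For the limit case, $G_\beta = \bigcup_{\gamma < \beta} G_\gamma$ is a directed union of the groups $G_\gamma$ (directedness is immediate because $(G_\gamma)_{\gamma \leq \alpha}$ is an increasing family), each of which lies in $\mathcal{C}$ by the inductive hypothesis, so closure under directed unions yields $G_\beta \in \mathcal{C}$. Applying the principle of transfinite induction then gives $G = G_\alpha \in \mathcal{C}$.

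The role of closure under isomorphisms is a minor technical point: the wreath product $\Gamma_\eta \wr G_\eta$ is only defined up to canonical isomorphism, and if one's formal definition of ``closed under wreath products'' produces a specific model of the wreath product that must then be identified with the subgroup $G_\beta \subset G$, closure under isomorphisms removes any ambiguity. I would mention this only briefly rather than belabor it.

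Honestly, I do not expect a genuine obstacle here — the lemma is essentially a formal consequence of the construction, and the real content lies in the three parenthetical examples (amenable, locally finite, torsion-free groups). The only place requiring any care is verifying that those three classes actually satisfy all three closure properties, particularly closure under wreath products: amenability of $\Gamma \wr \Lambda$ follows from amenability of $\Gamma$ and $\Lambda$ together with the fact that amenability passes to extensions and to the direct sum $\bigoplus_\Lambda \Gamma$ (a directed union of finite products of copies of $\Gamma$); local finiteness and torsion-freeness are preserved similarly. Since these are standard and the lemma as stated only requires the abstract closure hypotheses, I would relegate verification of the examples to a remark and keep the main argument at the level of the abstract induction.
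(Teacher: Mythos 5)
Your proof is correct and is exactly the paper's argument: the paper's entire proof of this lemma reads ``This follows by transfinite induction on $\alpha$,'' and your three cases (trivial base, successor via closure under wreath products, limit via closure under directed unions) are precisely what that induction unwinds to. The only quibble is your justification that $\{e\}\in\mathcal{C}$ (it does not formally follow from the stated closure properties); this is harmless since one can instead start the induction at $\beta=1$ using $G_1=\Gamma_0\wr\{e\}\cong\Gamma_0\in\mathcal{C}$ and closure under isomorphisms, and in any case the trivial group lies in each of the three example classes.
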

\begin{proof}
This follows by transfinite induction on $\alpha$.
\end{proof}
\begin{Lem}\label{Lem:ICC}
Let $\alpha$ be a limit ordinal.
Let $(\Gamma_\beta)_{\beta <\alpha}$ be a family of non-trivial groups.
Then the iterated wreath product $G$ of $(\Gamma_\beta)_{\beta <\alpha}$ is ICC.
\end{Lem}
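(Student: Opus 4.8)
The plan is to show that every nontrivial $g\in G$ has infinitely many conjugates, by producing more and more of them inside the finite-stage groups $G_\zeta$ as $\zeta$ approaches $\alpha$. The essential mechanism is that, although each individual $G_\zeta$ ($\zeta<\alpha$) may well be finite, the union $G=\bigcup_{\zeta<\alpha}G_\zeta$ is infinite: $\alpha$ is a limit ordinal, and each successor step $G_\zeta\subsetneq G_{\zeta+1}=\Gamma_\zeta\wr G_\zeta$ is strict because $\Gamma_\zeta$ is nontrivial (an element $(\phi,e)$ with $\phi$ a nonzero function lies in $G_{\zeta+1}$ but not in the top copy of $G_\zeta$).

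First I would record two preliminary observations. Fix $g\in G$ with $g\neq e$. Since $\alpha$ is a limit ordinal, $G=\bigcup_{\zeta<\alpha}G_\zeta$, so $g\in G_{\zeta_0}$ for some $\zeta_0<\alpha$ (one may take $\zeta_0=\deg(g)$). Next, since the inclusions $G_\zeta\subsetneq G_{\zeta+1}$ are proper at every successor step and there are infinitely many successor ordinals in $[\zeta_0,\alpha)$, the cardinalities grow without bound, i.e.\ $\sup_{\zeta_0\le\zeta<\alpha}\sharp G_\zeta=\infty$ (either some $G_\zeta$ is already infinite, or the finite sizes increase unboundedly).

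The core step is a single conjugation computation carried out at a sufficiently high stage. Given $N\in\mathbb{N}$, choose $\zeta$ with $\zeta_0\le\zeta<\alpha$ and $\sharp G_\zeta>2N$. Because $\alpha$ is a limit ordinal we have $\zeta+1<\alpha$, so $G_{\zeta+1}=\Gamma_\zeta\wr G_\zeta=(\bigoplus_{G_\zeta}\Gamma_\zeta)\rtimes_\sigma G_\zeta$ is a subgroup of $G$, and $g\in G_{\zeta_0}\subseteq G_\zeta$ is identified with the top-group element $(1,g)$, where $1$ denotes the trivial function. Fixing a nontrivial $\gamma\in\Gamma_\zeta$ and, for each $x\in G_\zeta$, the point-mass function $\phi_x$ supported at $x$ with value $\gamma$, a direct computation with the wreath-product multiplication gives
\[
(\phi_x,e)\,(1,g)\,(\phi_x,e)^{-1}=(\psi_x,g),
\]
where $\psi_x$ is supported on the two-element set $\{x,gx\}$ (note $gx\neq x$, as $g\neq e$) with values $\gamma$ at $x$ and $\gamma^{-1}$ at $gx$. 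Since the second coordinate is fixed, distinct $x$ yield distinct conjugates, except that when $g^2=e$ and $\gamma^2=e$ the points $x$ and $gx$ produce the same $\psi$; in every case each conjugate arises from at most two values of $x$. Hence one obtains at least $\sharp G_\zeta/2>N$ conjugates of $g$ inside $G_{\zeta+1}\subseteq G$. As $N$ is arbitrary, the conjugacy class of $g$ is infinite, and as $g\neq e$ is arbitrary, $G$ is ICC.

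I expect the only genuine obstacle to be conceptual rather than computational: one must resist finishing at a fixed stage using an infinite shift group or an infinite $\Gamma_\zeta$, since in general (for instance $\alpha=\omega$ with every $\Gamma_n$ finite) all the $G_\zeta$ are finite. The limit-ordinal hypothesis is used precisely here, to let the stage $\zeta$ tend to $\alpha$ so that $\sharp G_\zeta\to\infty$ while the clean conjugation identity still holds at each finite stage. The identity itself is routine, relying only on the semidirect-product structure of the wreath product.
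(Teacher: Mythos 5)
Your proof is correct, and it reaches the conclusion by a slightly different route than the paper, although the engine is the same wreath-product conjugation identity $(\phi,e)(1,g)(\phi,e)^{-1}=(\phi\,\sigma_g(\phi^{-1}),g)$. The paper's proof is ``vertical'': it fixes the support at the coordinate $e$ and conjugates $g$ by a nontrivial element $g_n$ of $\Gamma_{\beta+n}$ for each $n\in\mathbb{N}$ (using that $\beta+n<\alpha$ because $\alpha$ is a limit ordinal), producing one conjugate per level; distinctness is then automatic because the $n$-th conjugate lies in $G_{\beta+n+1}\setminus G_{\beta+n}$, so no counting of coincidences is needed. Your proof is ``horizontal'': you stay at a single sufficiently high level $\zeta+1$ and vary the support point $x\in G_\zeta$ of the conjugating point mass, which forces you to add two auxiliary observations the paper avoids, namely that $\sharp G_\zeta\to\infty$ as $\zeta\to\alpha$ and that the map $x\mapsto\psi_x$ is at most two-to-one (the $g^2=e$, $\gamma^2=e$ coincidence); both are handled correctly. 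The trade-off is that your version makes explicit where the growth of the groups comes from and would survive in settings where only one level is available but is large, whereas the paper's version is shorter because distinctness of the conjugates is read off from the filtration $(G_\beta)_\beta$ for free. Both arguments use the limit-ordinal hypothesis in the same essential way, to guarantee that stages strictly above $\deg(g)$ keep existing inside $G$.
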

\begin{proof}
Let $g\in G \setminus \{ e\}$ be given.
Put $\beta:=\deg(g)$.
Since $\alpha$ is a limit ordinal, for any $n\in \mathbb{N}$,
we have $\beta+n <\alpha$.
For each $n\in \mathbb{N}$, choose
$g_n\in \Gamma_{\beta + n}\setminus \{e\}$.
We identify $\Gamma_{\beta+n}$ with the $e$-th direct summand
of $\bigoplus_{G_{\beta+n}} \Gamma_{\beta+n} \subset G_{\beta+n+1}$.
Then $g_n g g_n^{-1} = g_n \sigma_g (g_n^{-1}) g \in G_{\beta+n +1}\setminus G_{\beta+n}$ for each $n\in \mathbb{N}$.
Hence the conjugacy class of $g$ in $G$ is infinite.
\end{proof}

The following disjointness property is the key of our results. Cf.~Proposition 4.1 in \cite{Pop83}.
\begin{Lem}\label{Lem:disjoint}
Let $\alpha$ be an ordinal.
Let $(\Gamma_\beta)_{\beta < \alpha}$ be a family of torsion-free groups.
Let $G$ be the iterated wreath product of $(\Gamma_\beta)_{\beta < \alpha}$.
Then for any $\beta <\alpha$ and for any $g\in G \setminus G_\beta$,
we have
\[G_\beta \cap g G_\beta g^{-1} = \{e \}.\]
\end{Lem}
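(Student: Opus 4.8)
The plan is to reduce the statement to a single wreath-product layer and then exploit torsion-freeness through a fixed-point argument for the shift action. First I would fix $\beta < \alpha$ and $g \in G \setminus G_\beta$, and set $\delta + 1 := \deg(g)$, a successor ordinal by the remark following the definition of degree. Since $g \notin G_\beta$ while the family $(G_\gamma)$ is increasing, we have $\deg(g) > \beta$, hence $\beta \le \delta$ and $G_\beta \subseteq G_\delta$. As $g \in G_{\delta+1} = \Gamma_\delta \wr G_\delta = (\bigoplus_{G_\delta} \Gamma_\delta) \rtimes_\sigma G_\delta$ and every element of $G_\beta$ lies in $\{e\} \rtimes G_\delta$, the whole computation takes place inside the subgroup $G_{\delta+1}$, so the ambient group $G$ plays no role. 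Whether $\beta$ is a limit or a successor is irrelevant, since only the inclusion $G_\beta \subseteq G_\delta$ is used.

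Next I would write $g = (f, h)$ with $f \in \bigoplus_{G_\delta} \Gamma_\delta$ and $h \in G_\delta$; the condition $g \notin G_\delta$ forces $f \ne e$. For an arbitrary $k = (e, k_0) \in G_\beta$ (so $k_0 \in G_\beta$), a routine computation in the semidirect product gives
\[
g k g^{-1} = \bigl(f \cdot \sigma_{h k_0 h^{-1}}(f^{-1}),\ h k_0 h^{-1}\bigr).
\]
For $gkg^{-1}$ to belong to $G_\beta \subseteq \{e\} \rtimes G_\delta$, its $\bigoplus_{G_\delta}\Gamma_\delta$-component must vanish; since $\sigma_s$ is an automorphism, this is equivalent to the fixed-point equation $f = \sigma_{h k_0 h^{-1}}(f)$. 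It therefore suffices to show that this equation, together with $f \ne e$, forces $h k_0 h^{-1} = e$; then $k_0 = e$ and $k = e$, which is exactly the assertion $G_\beta \cap g G_\beta g^{-1} = \{e\}$.

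The crux is this fixed-point step, and it is where torsion-freeness enters. By Lemma \ref{Lem:lf} applied to the class of torsion-free groups, $G_\delta$ is torsion-free, so any $s := h k_0 h^{-1} \ne e$ has infinite order, and the left-multiplication action of $\langle s \rangle$ on the index set $G_\delta$ has only infinite orbits. Since $\sigma_s$ permutes coordinates by this action, a function $f$ with $\sigma_s(f) = f$ is constant along these infinite orbits; being finitely supported, it must equal $e$ on each orbit, hence $f = e$. Thus $f = \sigma_s(f)$ with $f \ne e$ is impossible unless $s = e$, completing the argument. I expect the only points needing care to be the bookkeeping in the conjugation formula and the verification that a nontrivial shift admits no nonzero finitely supported fixed vector; both are elementary once torsion-freeness of $G_\delta$ has been invoked.
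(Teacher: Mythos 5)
Your proof is correct and follows essentially the same route as the paper's: reduce to the top layer $G_{\delta+1}=\Gamma_\delta\wr G_\delta$ where $\delta$ is the predecessor of $\deg(g)$, and use torsion-freeness of $G_\delta$ to see that the left shift has no nontrivial finitely supported fixed points, so the $\bigoplus_{G_\delta}\Gamma_\delta$-component of $gkg^{-1}$ cannot vanish unless $k=e$. The only cosmetic difference is that the paper first replaces $g$ by $gh$ to put it inside $\bigoplus_{G_\delta}\Gamma_\delta$, whereas you carry the $G_\delta$-component $h$ through the conjugation formula explicitly.
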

\begin{proof}
Let $\gamma$ denote the predecessor of $\deg(g)$.
Note that by assumption, $\gamma \geq \beta$.
Since $G_\beta \subset G_\gamma$,
by replacing $\beta$ by $\gamma$ if necessary,
we may assume $g\in G_{\beta +1} \setminus G_\beta$.
Recall the definition $G_{\beta +1 } = \Gamma_\beta \wr G_\beta$.
By replacing $g$ by $gh$ for an appropriate $h \in G_\beta$,
we may assume
$g \in (\bigoplus_{G_\beta} \Gamma_\beta) \setminus \{ e \}$.
Then, since $G_\beta$ is torsion-free,
the (restricted) left shift action $\sigma \colon G_\beta \curvearrowright (\bigoplus_{G_\beta} \Gamma_\beta) \setminus \{ e \}$ is free.
Hence, for any $k\in G_\beta \setminus \{e \}$, we have
$g k g^{-1} = g \sigma_k(g^{-1}) k\in G \setminus G_\beta$.
\end{proof}
\section{Equivalence of Murray--von Neumann's approximate finiteness (A), (B), (C) for type II$_1$ factors of density character $\aleph_1$}\label{section:AFD}
In their fundamental work \cite{MvN}, Murray and von Neumann
developed the study of approximately finite dimensional von Neumann algebras.
They introduced three notions of approximate finiteness (A), (B), and (C), and show that all three conditions are equivalent for separable type II$_1$ factors.
In this section, following the strategy of Farah--Katsura \cite{FK},
we extend this result to type II$_1$ factors of density character $\aleph_1$.
Since the proof is basically the same as \cite{FK}, we only discuss the points where we need modifications.
Though in \cite{MvN} the definitions of approximate finiteness are stated in the separable case, we have obvious translations of these properties in general case as follows. 

Before stating them, we introduce the following notation.
Let $M$ be a type II$_1$ factor.
For $S, T \subset M$ and for $\delta>0$,
denote by $S\subset^\delta T$ if they satisfy the following condition:
for any $x\in S$, there is $y\in T$ with
$\|x - y\|_2 <\delta$.
Here $\| a \|_2:= \tau(a^\ast a)^{1/2}$ for $a \in M$.
\begin{Def}[Definitions 4.3.1, 4.5.2, and 4.6.1 in \cite{MvN}]
Let $M$ be a type II$_1$ factor.
\begin{enumerate}[(A)]
\item
We say that $M$ has {\it approximate finiteness (A)}
if it satisfies the following condition:
for any finite subset $F \subset M$ and any $\epsilon>0$,
there is an $n\in \mathbb{N}$ with the following property:
for any $q\geq n$, there is a type I$_q$ subfactor $N$ of $M$
with $ F \subset^\epsilon N$.
\item
We say that $M$ has {\it approximate finiteness (B)}
if it satisfies the following condition:
for any finite subset $F \subset M$ and any $\epsilon>0$,
there is a finite dimensional $\ast$-subalgebra $N$ of $M$
with $ F \subset^\epsilon N$.
\item 
We say that $M$ has {\it approximate finiteness (C)}
if it admits a strongly dense $\ast$-subalgebra obtained as a directed union of
 finite dimensional $\ast$-subalgebras.
\end{enumerate}
\end{Def}
In \cite{Ell}, it is shown that injectivity is equivalent to approximate finiteness (B) 
(certainly the proof depends on Connes's theorem \cite{Con}).
In a similar method to \cite{Ell}, one can show the equivalence of injectivity and approximate finiteness (A).
It is clear from the definition that (C) implies (B).
Hence to show the equivalence of (A), (B), (C), it suffices to show that injectivity implies (C).
Indeed, the following proposition holds true.
\begin{Prop}\label{Prop:AFD}
Let $M$ be an injective type II$_1$ factor of density character $\aleph_1$.
Then there is an increasing net $(M_i)_{i \in I}$ of
type I subfactors of $M$ whose union is strongly dense.
\end{Prop}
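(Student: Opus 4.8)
The plan is to build the increasing net of type I subfactors by transfinite exhaustion along an $\omega_1$-indexed filtration of $M$, using injectivity at each stage to approximate finitely many elements by finite dimensional subalgebras, and then leveraging the uncountable cofinality of $\omega_1$ together with a reflection/amalgamation argument to splice these local approximations into a coherent increasing net. Since $M$ has density character $\aleph_1$, by Proposition \ref{Prop:dc} I can fix a strongly dense subset $\{x_\beta : \beta < \omega_1\}$ of the unit ball of $M$. For each $\beta < \omega_1$, set $M_\beta := \Wso(\{x_\gamma : \gamma < \beta\})$, giving a continuous increasing filtration $(M_\beta)_{\beta < \omega_1}$ of separable von Neumann subalgebras with $\bigcup_\beta M_\beta$ strongly dense in $M$. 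The first key point is that a cofinal (club) set of the $M_\beta$ are themselves type II$_1$ factors: this follows because factoriality of $M_\beta$ is witnessed by countably many elements (relative commutant conditions), so a standard closing-off argument using the uncountable cofinality of $\omega_1$ produces a club $C \subset \omega_1$ such that $M_\beta$ is a type II$_1$ factor for every $\beta \in C$.

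Next I would exploit injectivity. Since $M$ is injective and each $M_\beta$ is a separable type II$_1$ factor, I can arrange (again passing to a club, using that injectivity is inherited by a suitable cofinal subfamily via the existence of trace-preserving conditional expectations $E_\beta \colon M \to M_\beta$) that each $M_\beta$ for $\beta$ in a club is itself injective, hence by Connes's theorem \cite{Con} isomorphic to the hyperfinite II$_1$ factor $\mathcal{R}$. The payoff is that each such $M_\beta$ has approximate finiteness (C) in the separable sense: it is a directed union of a countable increasing chain of type I subfactors. This is precisely the separable Murray--von Neumann theorem that Farah--Katsura reprove in the group setting, and it is where the classical theory feeds in. I would then set up the net indexed by pairs: for each $\beta$ in the club and each $n$, take $M_{\beta,n}$ a type I$_{k}$ subfactor of $M_\beta$ approximating the first $n$ of the $x_\gamma$ ($\gamma < \beta$) to within $1/n$ in $\|\cdot\|_2$.

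The main obstacle, exactly as in \cite{FK}, will be coherence: the type I subfactors produced inside $M_\beta$ and inside a larger $M_{\beta'}$ ($\beta < \beta'$) need not be nested, so the naive family is only approximately increasing. To repair this I would use the matching/perturbation lemmas for finite dimensional subalgebras of a II$_1$ factor (if two finite dimensional subalgebras are close in $\|\cdot\|_2$, a unitary close to $1$ conjugates one into a corner of the other), together with an inductive construction: having built an increasing chain of type I subfactors cofinally through $M_\beta$, when passing to the next level I first conjugate the new approximants by a unitary to contain the previously chosen subfactor up to a controlled error, and absorb the error into the next stage. The uncountable cofinality of $\omega_1$ is essential here, because at limit stages $\beta$ of countable cofinality I must take unions of countable increasing chains and verify they remain type I subfactors exhausting $M_\beta$; the club structure guarantees enough room to continue. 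Directedness of the resulting index set $I$ follows since any two approximants sit inside a common $M_\beta$, and strong density of the union follows from strong density of $\{x_\beta\}$ combined with the $1/n$-approximation, completing the construction. I would only sketch the perturbation estimates, since they are the routine quantitative core already carried out in \cite{FK}.
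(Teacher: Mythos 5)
Your proposal follows essentially the same route as the paper: produce an $\omega_1$-indexed, $\sigma$-complete filtration of $M$ by separable injective subfactors with dense union, apply Connes's theorem to see each is hyperfinite (hence a countable increasing union of matrix subfactors), and then splice these local approximations into a coherent increasing net by the Farah--Katsura transfinite recursion, using a $\|\cdot\|_2$-perturbation lemma for nearby matrix subfactors (the paper's Lemma \ref{Lem:AFD}) to repair non-nestedness. The only point where you diverge is how the separable subfactors are obtained, and there your justification is the one shaky step. You take the naive filtration $M_\beta = \Wso(\{x_\gamma:\gamma<\beta\})$ and assert that club-many $M_\beta$ are factors because ``factoriality is witnessed by countably many elements (relative commutant conditions).'' Triviality of the center does not reflect via relative-commutant witnesses: a central element of $M_\beta$ need not remain central in $M_{\beta'}$, and killing old central elements can create new ones, so there is no countable set of witnesses to close off against in that formulation. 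The correct reflection argument goes through uniqueness of the normal trace: by Dixmier averaging, for each $x$ in a countable dense subset of $M_\beta$ one finds countably many unitaries $u$ with $\tau(x)1$ in the $\|\cdot\|_2$-closed convex hull of $\{uxu^\ast\}$, and (via Kaplansky density, after passing from the norm-closed to the $\|\cdot\|_2$-closed hull) these unitaries can be taken from $\bigcup_\gamma M_\gamma$, hence from some $M_{F(\beta)}$; closing off under $F$ then gives a club on which every normal tracial state is $\tau$, so those $M_\beta$ are factors. The paper's Lemma \ref{Lem:directed} sidesteps this by building the averaging unitaries into the filtration from the start, which is the cleaner way to run your step; with that repair your argument matches the paper's.
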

We point out that one can show the equivalence of three approximate finiteness conditions
for type II$_1$ factors of density character without using Connes's classification theorem.
See Remark \ref{Rem:Con} for details.
However we need Proposition 4.2 rather than just the equivalence of three approximate finiteness conditions.
We therefore give a direct proof of Proposition \ref{Prop:AFD}
(certainly with the aid of Connes's classification theorem \cite{Con}).

To prove Proposition \ref{Prop:AFD}, we need a few lemmas.
The next lemma plays the role of Lemma 5.1 in \cite{FK} in the von Neumann algebra setting. For a \Cs-algebra $A$,
we set $\mathcal{B}_A:= \{ x\in A: \| x \|\leq 1\}$.
\begin{Lem}\label{Lem:AFD}
For any $\epsilon>0$ and any $n\in \mathbb{N}$,
there exists $\delta>0$ with the following property.
For any type II$_1$ factor $M$,
for any type I$_n$ subfactor $D$ of $M$ and any type I$_{m}$ subfactor $B$ of $M$
satisfying $\mathcal{B}_D \subset ^\delta \mathcal{B}_B$ and $n|m$,
and for any subfactor $E\subset B \cap D$,
there is a unitary element $u \in M \cap E'$ satisfying
$u D u^\ast \subset B$, $\|u -1 \|_2 < \epsilon$.
\end{Lem}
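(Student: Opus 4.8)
The plan is to first reduce to the case $E=\mathbb{C}$ by passing to relative commutants, and then to run a matrix-unit perturbation argument for the resulting pair. Throughout, I keep track of the fact that all perturbation estimates can be made to depend only on $n$ and $\epsilon$, uniformly over every choice of $M$, $D$, $B$, and $E$, since each estimate involves only a system of at most $n^2$ matrix units.

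For the reduction, recall that $E$ is a type $\mathrm{I}_k$ subfactor with $k\mid n$ and $k\mid m$, so one has the splittings $D=E\votimes(E'\cap D)$ and $B=E\votimes(E'\cap B)$, where $E'\cap D$ and $E'\cap B$ are type $\mathrm{I}_{n/k}$ and $\mathrm{I}_{m/k}$ subfactors of the type $\mathrm{II}_1$ factor $E'\cap M$, and $(n/k)\mid(m/k)$. If I can produce a unitary $u\in E'\cap M$ with $u(E'\cap D)u^\ast\subset E'\cap B$ and $\|u-1\|_2<\epsilon$, then, since $u$ commutes with $E$, it fixes $E$ and satisfies $uDu^\ast=E\votimes u(E'\cap D)u^\ast\subset B$, as required. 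To transport the hypothesis $\mathcal{B}_D\subset^\delta\mathcal{B}_B$ to the relative commutants, I would use the trace-preserving conditional expectation $\Phi\colon M\to E'\cap M$ given by averaging $\Phi(z)=\int_{U(E)}uzu^\ast\,du$ over the compact unitary group of $E$. Since $\Phi$ is a $\|\cdot\|_2$-contraction fixing $E'\cap M$ pointwise and carries $\mathcal{B}_B$ into $\mathcal{B}_{E'\cap B}$, applying it to an approximant of an element of $\mathcal{B}_{E'\cap D}$ yields $\mathcal{B}_{E'\cap D}\subset^\delta\mathcal{B}_{E'\cap B}$. This reduces the statement to the case $E=\mathbb{C}$ (after relabeling, $D$ is type $\mathrm{I}_n$, $B$ is type $\mathrm{I}_m$ with $n\mid m$), at the cost of replacing $(n,m)$ by divisors; a single $\delta$ handling all matrix sizes at most $n$ then suffices.

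For this core case, I would fix a system of matrix units $(e_{ij})_{i,j=1}^{n}$ of $D$ and, using $\mathcal{B}_D\subset^\delta\mathcal{B}_B$, choose $a_{ij}\in\mathcal{B}_B$ with $\|e_{ij}-a_{ij}\|_2<\delta$. These $a_{ij}$ form an approximate system of matrix units in $B$, and the next step is to perturb them to an exact system $(f_{ij})$ inside $B$ with $\|e_{ij}-f_{ij}\|_2$ bounded by a function of $\delta$ and $n$ tending to $0$ as $\delta\to0$: first correct $f_{11}$ to a genuine projection by functional calculus, then correct the remaining entries by polar decomposition, using $n\mid m$ to ensure that the unit $\sum_i f_{ii}$ equals $1_M$. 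Finally, letting $w_0$ be the partial isometry in the polar decomposition of $f_{11}e_{11}$, which intertwines $e_{11}$ and $f_{11}$ and is close to $e_{11}$, the element $u=\sum_{i=1}^{n}f_{i1}\,w_0\,e_{1i}$ is a unitary satisfying $u e_{ij}u^\ast=f_{ij}$, hence $uDu^\ast=\{f_{ij}\}''\subset B$, while $\|u-1\|_2$ is controlled by $\|f_{i1}-e_{i1}\|_2$ and $\|w_0-e_{11}\|_2$ and so is $<\epsilon$ once $\delta$ is small.

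The main obstacle is the perturbation of the approximate matrix units $(a_{ij})$ to an exact system $(f_{ij})\subset B$ with $\|\cdot\|_2$-estimates uniform over all $B$. Since $B$ may be a type $\mathrm{I}_m$ factor with $m$ arbitrarily large, the operator norm and $\|\cdot\|_2$ are not uniformly comparable, so the functional-calculus and polar-decomposition corrections must be carried out entirely in $\|\cdot\|_2$ while the accumulated error is kept a function of $\delta$ and $n$ alone; the bound of the number of relations by $n^2$ is precisely what makes this uniformity possible. A secondary point requiring care is checking that in the reduced picture $w_0$ and hence $u$ genuinely lie in $E'\cap M$, and that $u$ is exactly unitary rather than merely a near-isometry, which follows from $\sum_i e_{ii}=\sum_i f_{ii}=1$.
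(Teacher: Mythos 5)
Your argument is correct and follows the paper's proof in essentially the same way: the identical reduction to $E=\mathbb{C}$ via the tensor splitting $M=E\votimes(E'\cap M)$ (your averaging over $U(E)$ just makes explicit the paper's one-line observation that $\mathcal{B}_D\subset^\delta\mathcal{B}_B$ passes to the relative commutants), after which your matrix-unit perturbation argument is precisely the content of Lemmas 12.2.4 and 12.2.5 of \cite{KR}, which the paper cites at that point instead of reproving. So the two proofs coincide in approach; you have simply written out the standard perturbation lemmas rather than quoting them.
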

\begin{proof}
Since $E$ is a type I subfactor contained in $B \cap D$, we have the tensor product decompositions
$M= E \votimes N$, $B= E \votimes P$, $D= E \votimes Q$,
where $N:= M \cap E'$, $P := B \cap E'$, $Q:= D \cap E'$.
Observe that $\mathcal{B}_D \subset ^\delta \mathcal{B}_B$
implies $\mathcal{B}_{P} \subset^\delta \mathcal{B}_{Q}$.
This reduces the proof of the statement to the case $E =\mathbb{C}$.
In this case, the statement is an immediate consequence of Lemmas 12.2.4 and 12.2.5 in \cite{KR}.
\end{proof}

We further introduce a few terminologies.
Let $\Lambda$ be a directed set. Let $M$ be a von Neumann algebra.
Recall from \cite{FK} that $\Lambda$ is said to be {\it $\sigma$-complete}
if any countable directed subset of $\Lambda$ has a supremum in $\Lambda$.
A family $(M_\lambda)_{\lambda \in \Lambda}$ of von Neumann subalgebras
of $M$
is said to be {\it directed} if
$\lambda \leq \mu$ if and only if $M_\lambda \subset M_\mu$.
Following \cite{FK}, we say that
a directed family $(M_\lambda)_{\lambda \in \Lambda}$
of von Neumann subalgebras of $M$
is {\it $\sigma$-complete} if it satisfies the following conditions:
\begin{itemize}
\item $\Lambda$ is $\sigma$-complete.
\item For any countable directed subset $S\subset \Lambda$,
we have $M_{\sup(S)} = \Wso(\bigcup_{s\in S}M_s)$.
\end{itemize}
We say that
a directed family $(M_\lambda)_{\lambda \in \Lambda}$
of von Neumann subalgebras of $M$ has {\it dense union}
if the union $\bigcup_{\lambda \in \Lambda} M_\lambda$ is strongly dense in $M$.

We next show the following basic lemma, which plays the role of Lemma 5.5 in \cite{FK}.
\begin{Lem}\label{Lem:directed}
Every type II$_1$ factor $M$ of density character $\aleph_1$ admits a directed $\sigma$-complete family of
separable subfactors with dense union indexed by $\omega_1$.
\end{Lem}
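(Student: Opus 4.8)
The plan is to build the desired family by transfinite recursion along $\omega_1$, using the density character hypothesis to enumerate a strongly dense subset and closing up under the separable von Neumann algebras generated by initial segments. Concretely, since $M$ has density character $\aleph_1 = \omega_1$, Proposition \ref{Prop:dc} (1) furnishes a subset $S = \{ x_\xi : \xi < \omega_1 \} \subset M$ with $\Wso(S) = M$. For each $\xi < \omega_1$ I would like to associate a separable subfactor containing $\{ x_\eta : \eta \leq \xi \}$, arranged so that the resulting family is directed and $\sigma$-complete. The index set should be taken to be $\omega_1$ itself (or equivalently the countable directed subsets of $\omega_1$, i.e.\ the countable ordinals), which is $\sigma$-complete: every countable directed subset of $\omega_1$ is bounded (a countable set of countable ordinals has countable, hence strictly smaller, supremum) and thus has a supremum in $\omega_1$.

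First I would define, for each $\xi < \omega_1$, a separable subalgebra $M_\xi := \Wso(\{ x_\eta : \eta < \xi \} \cup C_\xi)$, where $C_\xi$ is a countable set of ``correction'' elements chosen so that $M_\xi$ is in fact a \emph{subfactor}, not merely a von Neumann subalgebra. The separability of $M_\xi$ is automatic from Proposition \ref{Prop:dc} (1) since it is generated by a countable set. The union $\bigcup_{\xi < \omega_1} M_\xi$ contains all the $x_\eta$ and is therefore strongly dense, giving the dense union condition. Directedness $M_\xi \subset M_{\zeta} \iff \xi \leq \zeta$ should be arranged by taking the generating sets to be increasing in $\xi$; monotonicity of generators gives one implication, and one builds in strict growth to get the converse. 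For the $\sigma$-completeness of the family, the crucial point is that at limit stages the algebra should equal the von Neumann algebra generated by the earlier ones: if $\lambda < \omega_1$ is a limit and $S \subset \lambda$ is countable and cofinal, I would want $M_\lambda = \Wso(\bigcup_{s \in S} M_s)$, which holds provided the generating sets satisfy $\bigcup_{\eta < \lambda} (\text{generators of } M_\eta) = \text{generators of } M_\lambda$ at limit stages; this is a standard continuity-of-unions arrangement in the recursion.

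\emph{The main obstacle} I anticipate is ensuring at every stage that $M_\xi$ is a \emph{factor}, while simultaneously keeping it separable and preserving $\sigma$-completeness. Merely generating a von Neumann algebra by a countable set need not yield a factor; its center may be nontrivial. The standard device is a closing-off argument: given a separable subalgebra $N$, its center $Z(N)$ is separable, and since $M$ is a factor each central element of $N$ fails to be central in $M$, so one can find countably many elements of $M$ witnessing noncentrality and adjoin them, producing a slightly larger separable algebra $N'$ whose center has ``shrunk.'' Iterating this countably many times and taking the von Neumann algebra generated by the union produces a separable subfactor containing $N$. I would run this factorization-closure at each step of the recursion, and — this is the delicate bookkeeping — I must run it in a way compatible with limits, so that the factor-correction elements introduced below stage $\lambda$ already generate the center-trivializing data at stage $\lambda$. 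In practice one interleaves: at successor steps adjoin the next $x_\xi$ together with one round of center-killing elements, and verify that the limit of a chain of separable subfactors whose centers are being successively trivialized is again a factor. This last verification is where care is needed, but it mirrors exactly the separable closing-off arguments of \cite{FK} (Lemma 5.5 there), so I expect the adaptation to density character $\aleph_1$ to go through with the $\omega_1$-indexed recursion replacing the countable one.
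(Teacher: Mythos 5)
Your overall architecture --- enumerate a strongly dense $\omega_1$-net, run a transfinite recursion along $\omega_1$, and at each successor step enlarge to a separable subfactor --- is exactly the paper's. The genuine gap is in the one step that carries all the weight: the claim that every separable von Neumann subalgebra $N\subset M$ is contained in a separable subfactor. Your mechanism (for each non-scalar element of $Z(N)$, adjoin elements of $M$ witnessing its non-centrality, iterate countably often, take the union) does not work as stated. A central element $z$ of the limit algebra $N_\infty=\Wso(\bigcup_k N_k)$ need not lie in any $N_k$; it is only a $\|\cdot\|_2$-limit of elements of the $Z(N_k)$ (namely $E_{N_k}(z)\in Z(N_k)$ with $E_{N_k}(z)\to z$), and knowing that each such approximant acquires \emph{some} nonzero commutator at the next stage gives no uniform lower bound surviving to the limit: the commutators may shrink to $0$ while $z$ stays exactly central. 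Closing this requires quantitative control, and the clean way to get it --- the paper's way --- is Dixmier averaging rather than center-killing: for each $x$ in a countable strongly dense subset of $N$, adjoin a countable set $T\subset U(M)$ with $\tau(x)\in\overline{\mathrm{conv}}\{uxu^\ast:u\in T\}$ (Theorem 8.3.6 of \cite{KR}). This forces every normal tracial state of $\Wso(N\cup T)$ to restrict to $\tau$ on $N$; iterating and taking the union yields a separable algebra with a \emph{unique} normal tracial state, hence a factor. Unlike triviality of the center, uniqueness of the normal trace passes automatically to increasing unions, which is precisely what makes the iteration close up.

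The same issue resurfaces at limit ordinals $\lambda<\omega_1$ of your recursion, where $\sigma$-completeness forces $M_\lambda=\Wso(\bigcup_{\beta<\lambda}M_\beta)$ and leaves no room to adjoin correction elements: you must know that the algebra generated by an increasing union of separable subfactors is again a factor. With the unique-trace formulation this is immediate; with your formulation it needs the separate martingale argument $E_{M_\beta}(z)\in Z(M_\beta)=\mathbb{C}1$, $E_{M_\beta}(z)\to z$ in $\|\cdot\|_2$, which you flag as ``where care is needed'' but do not supply. The remaining ingredients of your outline ($\sigma$-completeness of $\omega_1$, directedness via increasing generating sets, density of the union) are correct and agree with the paper.
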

\begin{proof}
We first show that
any separable von Neumann subalgebra $N$ of $M$ is contained in a separable subfactor of $M$. To see this, take a countable strongly dense subset $S$ of $N$.
By Theorem 8.3.6 in \cite{KR},
for any $x\in M$, we have
\[\tau(x)\in \overline{\rm conv}\{ u x u^\ast: u \in U(M)\}.\]
Here and below, the symbol `$\overline{\rm conv}$'
stands for the norm closure of the convex hull.
Since $S$ is countable, one can choose a countable subset $T$ of $U(M)$
satisfying
\[\tau(x)\in \overline{\rm conv}\{ u x u^\ast: u \in T\} {\rm~for~ all~} x\in S.\]
Set $N_1 := \Wso(N \cup T)$.
Since $S$ is strongly dense in $N$,
it follows from the choice of $T$ that
any normal tracial state $\tau_1$ on $N_1$ satisfies
$\tau_1|_N = \tau|_N$.
By continuing this process, we obtain
an increasing sequence $(N_n)_{n=1}^\infty$
of separable von Neumann subalgebras in $M$
with the following property:
for any $n\in \mathbb{N}$ and for any normal tracial state $\tau_{n+1}$ on $N_{n+1}$,
we have $\tau_{n+1}|_{N_n}= \tau|_{N_n}$.
Now define $L:= \Wso(\bigcup_{n \in \mathbb{N}}N_n)$.
Then by the choice of $N_n$'s,
$L$ has a unique (faithful) normal tracial state.
Hence $L$ is a factor. Clearly $L$ is separable.

Now choose a strongly dense net $(x_\alpha)_{\alpha < \omega_1}$ in $M$.
We define a directed family $(M_\alpha)_{\alpha <\omega_1}$ of separable subfactors of $M$ as follows.
We first choose $M_0$ to be a separable subfactor of $M$ containing $x_0$.
For an ordinal $\alpha<\omega_1$, assume we have defined the family $(M_\beta)_{\beta<\alpha}$.
We then define $M_\alpha$ as follows.
When $\alpha$ is a limit ordinal,
set $M_\alpha := \Wso(\bigcup_{\beta<\alpha} M_\beta)$.
When $\alpha$ has the predecessor $\eta$,
fix $y\in M \setminus M_\eta$
and
define $M_\alpha$ to be a separable subfactor of $M$ containing $M_\eta \cup \{x_\gamma:\gamma\leq \alpha\} \cup \{y\}$.
By the transfinite induction,
we obtain a family $(M_\alpha)_{\alpha < \omega_1}$
of separable subfactors of $M$.
It is clear from the construction that the family is directed and $\sigma$-complete.
Moreover, by the construction,
$x_\alpha \in M_{\alpha+1}$ for all $\alpha<\omega_1$.
This shows the density of $(M_\alpha)_{\alpha < \omega_1}$ in $M$.
\end{proof}
\begin{proof}[Proof of Proposition \ref{Prop:AFD}]
By Connes's classification theorem \cite{Con},
each separable type II$_1$ subfactor $N$ of $M$
admits an increasing sequence of subfactors of type $I_{2^n}$, $n\in \mathbb{N}$, whose union is strongly dense in $N$.
The rest of the proof is the same as the proof of Proposition 5.6 in \cite{FK}
modulo the following modifications.
\begin{enumerate}
\item
Replace the operator norm by the norm $\| \cdot \|_2$.
\item Use Lemmas \ref{Lem:AFD}, \ref{Lem:directed} in place of Lemmas 5.1, 5.5 in \cite{FK} respectively.
\end{enumerate}
(For item (2), since our approximating sequence consists of full matrix algebras, Lemma \ref{Lem:AFD}, a slightly weaker analogue of Lemma 5.1 in \cite{FK},
is sufficient for our purpose.)
\end{proof}
\begin{Rem}\label{Rem:Con}
We point out that the equivalence of approximate finiteness (A) to (C) for type II$_1$ factors of density character $\aleph_1$ can be shown without using Connes's classification theorem (but using the classification theorem of separable AFD type II$_1$ factors due to Murray and von Neumann \cite{MvN}).
Indeed, the following version of Lemma \ref{Lem:directed} holds true:
every type II$_1$ factor $M$ of density character $\aleph_1$ with approximate finiteness (B) admits a directed $\sigma$-complete family of
separable AFD subfactors with dense union indexed by $\omega_1$.
To see this, it suffices to show that any separable von Neumann subalgebra $N$ of $M$ is contained in a separable AFD subfactor of $M$.
(The rest of the proof is similar to that of Proposition \ref{Prop:AFD}.)
Choose a strongly dense sequence $(x_{n, 1})_{n=1}^\infty$ of $N$.
By the assumption, one can choose a finite dimensional $\ast$-subalgebra $F_1$ of $M$ with $\{ x_{1, 1}\} \subset^{1} F_1$.
Set $N_1 := \Wso(N \cup F_1)$.
Choose a strongly dense sequence $(x_{n, 2})_{n=1}^\infty$ of $N_1$.
Again by the assumption, one can choose a finite dimensional $\ast$-subalgebra $F_2$ of $M$ with $\{ x_{i, j}: 1\leq i, j \leq 2 \} \subset^{1/ 2} F_2$.
Set $N_2:= \Wso(N_1 \cup F_2)$.
By iterating this construction, we obtain an increasing sequence $(N_n)_{n=1}^\infty$
of separable von Neumann subalgebras of $M$ and a strongly dense sequence $(x_{n, m+1})_{n=1}^\infty$ in $N_m$ for each $m\in \mathbb{N}$ with the following property:
for any $m\in \mathbb{N}$, there is a finite dimensional $\ast$-subalgebra $F$ of $N_{m}$ with $\{ x_{i, j} :1\leq i, j \leq m \} \subset ^{1/m} F$.
This implies approximate finiteness (B) of $L_1 := \Wso(\bigcup_{n=1}^\infty N_n)$.
We have seen in the proof of Lemma \ref{Lem:directed}
that there is a separable subfactor $P_1$ of $M$ containing $L_1$.
By iterating the constructions, we obtain an increasing sequence
\[N \subset L_1 \subset P_1 \subset L_2 \subset P_2 \subset \cdots \subset L_i \subset P_i \subset \cdots\]
of separable von Neumann subalgebras of $M$ such
that each $L_i$ satisfies approximate finiteness (B) and that each $P_i$
is a factor.
Now set $Q:= \Wso(\bigcup_{i=1}^\infty L_i)= \Wso(\bigcup_{i=1}^\infty P_i)$.
Obviously $N \subset Q$.
By definition, $Q$ is separable and satisfies approximate finiteness (B),
while by the second equality, $Q$ is a subfactor of $M$.
Now by Murray and von Neumann's theorem \cite{MvN},
$Q$ is a (separable) AFD subfactor of $M$.
\end{Rem}
\section{Constructions of tensorially prime approximately finite dimensional operator algebras}\label{section:prime}
In this section, under a mild condition, we show primeness
and absence of regular abelian $\ast$-subalgebras of
the group von Neumann algebras of (uncountable) iterated wreath product groups.
As an application, we prove Theorem \ref{Thmint:prime}.
\begin{Prop}\label{Prop:groupvn}
Let $\alpha$ be a limit ordinal of uncountable cofinality.
Let $(\Gamma_\beta)_{\beta <\alpha}$ be a family
of non-trivial torsion-free groups
and let $G$ denote its iterated wreath product.
Then the group von Neumann algebra $L(G)$ is prime and has
no infinite dimensional regular abelian $\ast$-subalgebras.
\end{Prop}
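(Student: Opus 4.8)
The plan is to establish both conclusions — primeness and the absence of infinite dimensional regular abelian $\ast$-subalgebras — through Popa's orthogonality method, exploiting the disjointness property recorded in Lemma \ref{Lem:disjoint} together with the uncountable cofinality of $\alpha$. First I would observe that $G$ is ICC by Lemma \ref{Lem:ICC} (as $\alpha$ is a limit ordinal and all $\Gamma_\beta$ are non-trivial), so $L(G)$ is a type II$_1$ factor, and I fix the canonical trace $\tau$. The key structural input is the following: for each $\beta<\alpha$ the subgroup $G_\beta$ gives a subfactor $L(G_\beta)\subset L(G)$, and by Lemma \ref{Lem:disjoint}, for any $g\in G\setminus G_\beta$ we have $G_\beta\cap gG_\beta g^{-1}=\{e\}$. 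In the language of \cite{Pop83} this disjointness translates into the orthogonality relation $L(G_\beta)\perp g L(G_\beta)g^{-1}$ for all $g\notin G_\beta$; concretely, for $x,y\in L(G_\beta)$ with $\tau$-centered supports on $G_\beta$, the products $x\,(u_g y u_g^\ast)$ have Fourier support disjoint from the diagonal, forcing $\tau(x\cdot u_g y u_g^\ast)=\tau(x)\tau(u_g y u_g^\ast)$. This is precisely the input Popa's machinery needs.

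The next step is to extract the two conclusions from this family of orthogonalities, and here the uncountable cofinality of $\alpha$ is decisive. The point is that $L(G)=\Wso(\bigcup_{\beta<\alpha} L(G_\beta))$, and any countable subset of $L(G)$ — in particular any separable von Neumann subalgebra, or any countable collection of normalizing unitaries — is already $\|\cdot\|_2$-approximately contained in some single $L(G_\beta)$: a countable set has countably many ``degrees'' appearing in its Fourier expansions, and since $\sup$ of a countable subset of $\{\beta:\beta<\alpha\}$ stays strictly below $\alpha$ by uncountable cofinality, the whole set lives (up to $\|\cdot\|_2$-approximation) in $L(G_\gamma)$ for a suitable $\gamma<\alpha$. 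For \textbf{primeness}, suppose $L(G)\cong M_1\votimes M_2$ with both factors type II$_1$. A separable subfactor of each $M_i$ sits inside some $L(G_\beta)$; Popa's orthogonality argument (cf.~Proposition 4.1 in \cite{Pop83}) then shows that two commuting subalgebras cannot both have large ``displaced'' parts relative to $L(G_\beta)\perp u_g L(G_\beta)u_g^\ast$, which contradicts the tensor splitting — so one factor must be type I. For the \textbf{absence of regular abelian subalgebras}, I would take a putative infinite dimensional regular abelian $A\subset L(G)$, pass to its strong closure (diffuse by Lemma \ref{Lem:diffusevn}), and analyze its normalizer: the normalizing unitaries, being countably many relative to any separable piece, are captured inside some $L(G_\gamma)$, while the disjointness $L(G_\gamma)\perp u_g L(G_\gamma)u_g^\ast$ prevents these unitaries from generating all of $L(G)$, contradicting regularity.

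The \textbf{main obstacle} I anticipate is the transfinite/cofinality bookkeeping needed to reduce statements about the whole non-separable algebra $L(G)$ to statements about the separable subfactors $L(G_\beta)$, and to verify that Popa's orthogonality estimates — originally designed for the free-group and separable settings — apply verbatim once one has isolated the relevant $L(G_\beta)\perp u_g L(G_\beta)u_g^\ast$ pairs. Specifically, one must confirm that a diffuse abelian subalgebra together with a generating set of its normalizer can genuinely be pushed into a single $L(G_\gamma)$ (rather than merely a countable increasing union), which is exactly where uncountable cofinality of $\alpha$ must be invoked carefully. Once that reduction is secured, the orthogonality relation $A\perp u_g A u_g^\ast$ (for $g$ of degree exceeding $\gamma$) does the remaining work in Popa's framework; the subtlety is entirely in the approximation and the passage to supremums, not in the operator-algebraic estimates themselves, which are standard given Lemma \ref{Lem:disjoint}.
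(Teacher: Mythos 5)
Your overall architecture is the paper's: Lemma \ref{Lem:ICC} for factoriality, Lemma \ref{Lem:disjoint} feeding Popa's orthogonality method, and uncountable cofinality to locate countable data inside a single $L(G_\beta)$. Two of your execution points need correction, and one of them is a step that would fail as you describe it. First, the minor one: the containment of a countable set $S \subset L(G)$ in $L(G_\beta)$ is exact, not merely a $\|\cdot\|_2$-approximation. Each $x \in L(G)$ has countable Fourier support $\{g : \tau(xu_g^\ast) \neq 0\}$, so the union of these supports over $S$ is countable, the supremum $\beta$ of the corresponding degrees is strictly below $\alpha$ by uncountable cofinality, and then every $x\in S$ already lies in $L(G_\beta)$ (it equals its image under the canonical conditional expectation onto $L(G_\beta)$).

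The genuine problem is in how you propose to control the normalizer. You write that the normalizing unitaries, "being countably many relative to any separable piece, are captured inside some $L(G_\gamma)$," and you identify as the main obstacle the need to push "a diffuse abelian subalgebra together with a generating set of its normalizer" into a single $L(G_\gamma)$ by cofinality. That step cannot work: if $A$ were regular, $\mathcal{N}_{L(G)}(A)$ would generate all of $L(G)$, so any generating set of the normalizer is uncountable (of cardinality at least the density character of $L(G)$), and the cofinality argument gives no bound on the supremum of its degrees. The paper's resolution is that no counting argument is applied to the normalizer at all. Cofinality is used only to place a \emph{countably generated} diffuse subalgebra inside some $L(G_\beta)$; then Proposition 4.1 of \cite{Pop83} together with Lemma \ref{Lem:disjoint} yields the purely operator-algebraic conclusion $\mathcal{N}_{L(G)}(B)'' \subset L(G_\beta)$ for every diffuse countably generated $B \subset L(G_\beta)$ --- the full, uncountable normalizer is trapped by orthogonality, not by cardinality. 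For the abelian statement this is applied twice: choose a countably generated diffuse $A_0 \subset A$ (after Lemma \ref{Lem:diffusevn} guarantees $A$ is diffuse and unital); since $A$ is abelian it normalizes $A_0$, so $A \subset \mathcal{N}_{L(G)}(A_0)'' \subset L(G_\beta)$, and a second application to $A$ itself gives $\mathcal{N}_{L(G)}(A)'' \subset L(G_\beta) \subsetneq L(G)$, contradicting regularity. For primeness the analogous trick is that $M_1$ commutes with a countably generated diffuse $B_2 \subset M_2$, hence $M_1 \subset \mathcal{N}_{L(G)}(B_2)'' \subset L(G_\beta)$, and symmetrically $M_2 \subset L(G_\beta)$; this is the precise form of your "two commuting subalgebras cannot both have large displaced parts." With these corrections your proof closes along the same lines as the paper's.
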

\begin{proof}
The proof is similar to Popa's proof of the same results for uncountable free group factors \cite{Pop83}.
We first observe that $L(G)$ is a type II$_1$ factor by Lemma \ref{Lem:ICC}.

Observe that
for any countable subset $S$ of $L(G)$, 
the set 
\[T:=\{g\in G: \tau(s u_g^\ast)\neq 0 {\rm~for~some~}s\in S\}\]
is countable.
Since $\alpha$ is of uncountable cofinality, we have
\[\beta:=\sup \left\{ \deg(g): g\in T \right\} < \alpha.\]
Note that $S \subset L(G_\beta)$.
This observation together with Proposition 4.1 in \cite{Pop83} and Lemma \ref{Lem:disjoint} shows
that for any diffuse, countably generated von Neumann subalgebra
$B$ of $L(G)$,
there is $\beta <\alpha$ with $\mathcal{N}_{L(G)}(B)'' \subset L(G_\beta)$.

We now show that $L(G)$ has no infinite dimensional regular abelian $\ast$-subalgebras.
To see this, assume we have an infinite dimensional regular abelian $\ast$-subalgebra
$A \subset L(G)$. By taking the strong closure, we may assume that $A$ is strongly closed.
Then, by Lemma \ref{Lem:diffusevn}, $A$ is diffuse and contains the unit of $L(G)$.
Choose a countably generated, diffuse
von Neumann subalgebra $A_0 \subset A$.
By the observation in the previous paragraph,
one can choose $\beta <\alpha$ satisfying $A \subset \mathcal{N}_{L(G)}(A_0)'' \subset L(G_\beta)$.
By applying Proposition 4.1 in \cite{Pop83} and Lemma \ref{Lem:disjoint} to $A$ itself,
we conclude $\mathcal{N}_{L(G)}(A)'' \subset L(G_\beta) \subsetneq L(G)$.
This contradicts the regularity of $A$ in $L(G)$.

We now prove the primeness of $L(G)$.
To lead to a contradiction, assume we have a tensor product decomposition
$L(G) =M_1 \votimes M_2$ where both $M_i$ are infinite dimensional.
Then as $L(G)$ is a type II$_1$ factor, so are both $M_i$.
Choose a countably generated, diffuse von Neumann subalgebra
$B_i$ of $M_i$ for $i=1, 2$.
By the argument in the second previous paragraph,
one can choose $\beta <\alpha$ satisfying $\mathcal{N}_{L(G)}(B_i)'' \subset L(G_\beta)$ for $i=1, 2$.
This implies the relations $M_{1} \subset \mathcal{N}_{L(G)}(B_2)'' \subset L(G_\beta)$ and $M_2 \subset \mathcal{N}_{L(G)}(B_1)''\subset L(G_\beta)$.
Since $G_\beta \subsetneq G$, this is a contradiction.
\end{proof}
\begin{Cor}\label{Cor:primeAFD}
There is a prime AFD type II$_1$ factor without infinite dimensional regular abelian $\ast$-subalgebras.
\end{Cor}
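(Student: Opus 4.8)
The plan is to combine the two main technical inputs already established in the paper: Proposition~\ref{Prop:groupvn}, which produces a prime type II$_1$ factor with no infinite dimensional regular abelian $\ast$-subalgebra, and Proposition~\ref{Prop:AFD}, which upgrades injectivity to the AFD condition for factors of density character $\aleph_1$. Concretely, I would specialize the iterated wreath product construction to a family of groups that is simultaneously amenable (to force injectivity of $L(G)$) and torsion-free (to invoke the disjointness Lemma~\ref{Lem:disjoint}), indexed along $\omega_1$ so that the density character is exactly $\aleph_1$ and the cofinality hypothesis is met.

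First I would fix the ordinal $\alpha = \omega_1$, which is a limit ordinal of uncountable cofinality as noted in the preliminaries. For the family $(\Gamma_\beta)_{\beta < \omega_1}$ I would take each $\Gamma_\beta = \mathbb{Z}$, the simplest non-trivial, torsion-free, amenable group. Let $G$ denote the iterated wreath product of this family. By Lemma~\ref{Lem:lf} applied to the class of amenable groups (which is closed under wreath products, isomorphisms, and directed unions), $G$ is amenable; hence $L(G)$ is injective. Since each $\mathbb{Z}$ is non-trivial and torsion-free, Proposition~\ref{Prop:groupvn} applies directly: $L(G)$ is a prime type II$_1$ factor with no infinite dimensional regular abelian $\ast$-subalgebra.

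Next I would verify the density character of $L(G)$ is $\aleph_1$, so that Proposition~\ref{Prop:AFD} becomes available. The group $G = G_{\omega_1} = \bigcup_{\beta < \omega_1} G_\beta$ is a directed union of countable groups, and since $\omega_1$ is uncountable with each successor step $G_{\beta+1} = \Gamma_\beta \wr G_\beta$ strictly enlarging a countable group, $G$ has cardinality $\aleph_1$. By Proposition~\ref{Prop:dc}(1), the density character of $L(G)$ equals $\aleph_1$: the generating set $\{u_g : g \in G\}$ gives the upper bound $\aleph_1$, and the strict increase along $\omega_1$ together with the fact that distinct group elements give orthonormal vectors in $L^2(L(G))$ gives the matching lower bound via part~(3). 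With density character exactly $\aleph_1$ and $L(G)$ injective, Proposition~\ref{Prop:AFD} yields an increasing net of type I subfactors with strongly dense union, which is precisely condition~(C), so $L(G)$ is AFD.

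The main obstacle is not any single deep step but rather confirming the density character is genuinely $\aleph_1$ and not larger: one must check that the transfinite construction along $\omega_1$ does not inflate the cardinality of $G$ beyond $\aleph_1$, which requires observing that each $G_\beta$ remains countable for $\beta < \omega_1$ (a successor step wreaths a copy of $\mathbb{Z}$ by a countable group, staying countable, and limit steps are countable unions of countable groups) while the full union over $\omega_1$ has cardinality exactly $\aleph_1$. Once the cardinality bookkeeping is pinned down, the corollary follows by assembling Lemma~\ref{Lem:lf}, Proposition~\ref{Prop:groupvn}, and Proposition~\ref{Prop:AFD}, with no further computation required.
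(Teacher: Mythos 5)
Your proposal is correct and follows essentially the same route as the paper: specialize the iterated wreath product to a family of non-trivial torsion-free countable amenable groups indexed by $\omega_1$ (the paper allows any such family, you take $\Gamma_\beta=\mathbb{Z}$), use Lemma~\ref{Lem:lf} for amenability hence injectivity, Proposition~\ref{Prop:groupvn} for primeness and absence of regular abelian $\ast$-subalgebras, and Proposition~\ref{Prop:AFD} after checking the density character is $\aleph_1$. Your extra care with the cardinality bookkeeping is sound but not a departure from the paper's argument.
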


\begin{proof}
Let $(\Gamma_{\alpha})_{\alpha < \omega_1}$
be a family of non-trivial torsion-free countable amenable groups.
Let $G$ denote the iterated wreath product of $(\Gamma_{\alpha})_{\alpha < \omega_1}$. By Lemmas \ref{Lem:lf} and \ref{Lem:ICC}, $G$ is amenable and ICC.
Hence $L(G)$ is an injective type II$_1$ factor.
Since $\sharp G= \aleph_1$,
the density character of $L(G)$ is $\aleph_1$.
Propositions \ref{Prop:AFD} and \ref{Prop:groupvn}
now imply that $L(G)$ possesses the desired properties.
\end{proof}

A unital \Cs-algebra is said to be {\it monotracial} if there is a unique tracial state on it.
\begin{Cor}\label{Cor:primeAF}
There is a tensorially prime simple unital monotracial AF-algebra without infinite dimensional regular abelian $\ast$-subalgebras.
\end{Cor}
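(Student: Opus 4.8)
The plan is to reuse the group von Neumann algebra $M := L(G)$ produced in the proof of Corollary \ref{Cor:primeAFD}, where $G$ is the iterated wreath product of a family $(\Gamma_\alpha)_{\alpha<\omega_1}$ of non-trivial torsion-free countable amenable groups. Recall that $M$ is then a prime injective type II$_1$ factor of density character $\aleph_1$ with no infinite dimensional regular abelian $\ast$-subalgebras. First I would invoke Proposition \ref{Prop:AFD} to obtain an increasing net $(M_i)_{i\in I}$ of type I subfactors of $M$ with strongly dense union, and set $A := \overline{\bigcup_{i\in I} M_i}$ (the norm closure). Since each $M_i$ is a type I subfactor of $M$, it is a full matrix algebra sharing the unit $1_M$, and each inclusion $M_i \subset M_j$ is unital. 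Hence $A$ is a unital AF-algebra, namely a directed union of full matrix algebras, which is strongly dense in $M$.

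Next I would verify the four required properties. For simplicity I use the standard fact that a direct limit of simple unital \Cs-algebras along unital connecting $\ast$-homomorphisms is simple: any proper closed ideal $J$ of $A$ must meet each $M_i$ trivially, since otherwise $J$ would contain the unit $1_A$ and equal $A$; hence the quotient map is isometric on the dense subalgebra $\bigcup_{i\in I} M_i$ and therefore on $A$, forcing $J = 0$. For the monotracial property, I note that each matrix algebra $M_i$ carries a unique tracial state, namely its normalized trace; consequently any tracial state on $A$ is determined on the dense subalgebra $\bigcup_{i\in I} M_i$ and hence unique, the distinguished trace being the restriction of the trace of $M$. Tensorial primeness is then immediate from the remark that any strongly dense \Cs-subalgebra of a prime type II$_1$ factor is tensorially prime, together with the primeness of $M$ established in Proposition \ref{Prop:groupvn}.

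The remaining, and most delicate, point is the absence of infinite dimensional regular abelian $\ast$-subalgebras. Here I would argue by contradiction: suppose $C \subset A$ is an infinite dimensional regular abelian $\ast$-subalgebra. Applying Proposition \ref{Prop:regular} to the inclusion $A \subset M \subset \mathbb{B}(L^2(M,\tau))$, whose associated von Neumann algebra $\pi(A)'' = M$ is a type II$_1$ factor, I obtain that $\pi(C)''$ is regular in $M$ and is either diffuse or finite dimensional. Since the standard representation is faithful on $M$, and hence on $C$, finite dimensionality of $\pi(C)''$ would force $\pi(C) \cong C$ to be finite dimensional; as $C$ is infinite dimensional, $\pi(C)''$ must be diffuse, in particular an infinite dimensional regular abelian $\ast$-subalgebra of $M$. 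This contradicts Proposition \ref{Prop:groupvn}. The main obstacle I anticipate is precisely this last step: one must ensure both that regularity survives the passage from the \Cs-algebraic normalizer in $A$ to the von Neumann normalizer in $M$, which is exactly what Proposition \ref{Prop:regular} supplies, and that infinite dimensionality is preserved under the strong closure, which follows from faithfulness of the standard form. Combining the four verifications yields the desired tensorially prime simple unital monotracial AF-algebra.
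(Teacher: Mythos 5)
Your proposal is correct and takes essentially the same approach as the paper: the paper's proof of Corollary \ref{Cor:primeAF} is just a citation of Propositions \ref{Prop:regular}, \ref{Prop:AFD} and Corollary \ref{Cor:primeAFD}, and you have spelled out exactly that deduction (take the norm closure of the dense union of matrix subfactors of $M=L(G)$, get tensorial primeness from strong density in a prime type II$_1$ factor, and use Proposition \ref{Prop:regular} to promote a putative infinite dimensional regular abelian $\ast$-subalgebra of $A$ to a diffuse regular abelian von Neumann subalgebra of $M$, contradicting Proposition \ref{Prop:groupvn}). Your added verifications of simplicity and uniqueness of the trace are standard and correct.
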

\begin{proof}
This is a consequence of Propositions \ref{Prop:regular}, \ref{Prop:AFD}, and Corollary \ref{Cor:primeAFD}.
\end{proof}
\begin{Rem}
\begin{enumerate}
\item By Proposition \ref{Prop:AFD}, \Cs-algebras constructed in Corollary \ref{Cor:primeAF} are in fact {\it AM-algebras} in the sense of \cite{FK}, \cite{FHKT}. That is, they are obtained as the inductive limits of full matrix algebras.
\item
By slightly modifying the proof of Proposition \ref{Prop:AFD}, for any UHF-algebra $A$,
one can arrange a \Cs-algebra obtained in Corollary \ref{Cor:primeAF}
to have the same ordered K-theory as $A$.
To see this, take a sequence $(k_n)_{n=1}^\infty$ of natural numbers
satisfying $A \cong \bigotimes_{n=1}^\infty \mathbb{M}_{k_n}(\mathbb{C})$.
Set $K_n := \prod_{i=1}^n k_i$ for each $n\in \mathbb{N}$.
Then, for any separable injective type II$_1$ factor,
by Connes's classification theorem \cite{Con},
it admits an increasing sequence of subfactors of type $I_{K_n}$, $n\in \mathbb{N}$,
with dense union.
Use these approximation sequences instead of the one consisting of type $I_{2^n}$ subfactors to proceed the proof of Proposition \ref{Prop:AFD}.
Then the resulting inductive system consists of full matrix algebras of size $K_n$, $n\in \mathbb{N}$.
Thus the inductive limit \Cs-algebra satisfies the required properties.
\item It is not hard to see from the proof of Proposition \ref{Prop:AFD} (cf.~\cite{FK})
that the AF-algebras we obtained in Corollary \ref{Cor:primeAF}
are of density character $\aleph_1$.

\end{enumerate}
\end{Rem}
We will see in the next section that the group von Neumann algebras
in Proposition \ref{Prop:groupvn} have no non-trivial central sequences (see Theorem \ref{Thm:nocentral}).
\section{Central sequences in certain crossed products}\label{section:central}
In this section, we study central sequences of the reduced crossed products
of non-commutative Bernoulli shifts of iterated wreath product groups.
We first introduce a few notations.
Let $A$ be a unital \Cs-algebra.
We set $\bigotimes_{\emptyset} A := \mathbb{C}$ for convenience.
For a group $G$ and for a $G$-set $S$ (possibly empty),
we equip $\bigotimes_S A$ with the
tensor shift $G$-action induced from the $G$-action on $S$.
We set \[A \wrr{S} G := \left(\bigotimes_S A \right) \rc G.\]
When $H$ is a subgroup of $G$ and
$T$ is a (possibly empty) $H$-invariant subset of $S$,
we identify
$A \wrr{T} H$
with a \Cs-subalgebra of $A\wrr{S} G$ in the canonical way.
Here we regard $T$ as an $H$-set by restricting the original action.
For a group $G$, we equip $G$ with the left translation $G$-action.
In the most important case $S=G$,
we simply denote $A \wrr{G} G$ by $A \wr G$ for short.
We use similar notations for von Neumann algebras.
For instance, for a von Neumann algebra $M$ with a faithful normal state $\varphi$
and for a group $G$, we set $(M, \varphi) \vwr G := \left[\bigvotimes_G (M, \varphi) \right]\vnc G$.

\begin{Thm}\label{Thm:nocentral}
Let $\alpha$ be a limit ordinal of uncountable cofinality.
Let $(\Gamma_\beta)_{\beta <\alpha}$ be a family of non-trivial groups
and let $G$ denote its iterated wreath product. Then the following statements hold true.
\begin{enumerate}\upshape
\item
Let $A$ be a unital \Cs-algebra.
Then $A \wr G$
has no non-trivial central sequences.
\item
Let $M$ be a von Neumann algebra with a faithful normal state $\varphi$.
Then $(M, \varphi)\vwr G$
has no non-trivial central sequences.
\end{enumerate}
\end{Thm}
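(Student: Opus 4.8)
The plan is to analyze a bounded central sequence $(x_n)$ through its Fourier coefficients and to exploit the two defining features of the construction: the \emph{uncountable cofinality} of $\alpha$, which confines the countable amount of data carried by a single sequence below some $\beta<\alpha$, and the \emph{freeness together with tensor-independence} of the left-translation shift. Write $B:=\bigotimes_G A$ (resp.\ $B:=\bigvotimes_G(M,\varphi)$) and $N:=B\rc G$ (resp.\ $N:=B\vnc G$), let $E_0\colon N\to B$ be the canonical conditional expectation, and put $x(g):=E_0(xu_g^\ast)$. Fix a product state $\psi:=\bigotimes_G\varphi_0$ on $B$ (with $\varphi_0=\varphi$ in the von Neumann case); since $\psi$ is shift-invariant, $\omega:=\psi\circ E_0$ is a well-defined (faithful normal) state on $N$, and I write $\|x\|_\omega:=\omega(x^\ast x)^{1/2}$. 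Centrality forces, coefficientwise, both $\sigma_h(x_n(h^{-1}\cdot h))-x_n(\cdot)\to0$ and $b\,x_n(g)-x_n(g)\sigma_g(b)\to0$ for each fixed $h\in G$, $b\in B$. The first step is the confinement: approximating each $x_n$ by a finitely supported element and collecting the countably many group elements and coordinates that occur, the resulting countable subset of $G$ has degrees bounded by some $\beta<\alpha$ (uncountable cofinality), so $x_n\approx y_n\in N_\beta:=\bigl(\bigotimes_{G_\beta}A\bigr)\rc G_\beta$. The gain is that for every $g\in G\setminus G_\beta$ one has $gG_\beta\cap G_\beta=\emptyset$, so $\bigotimes_{G_\beta}A$ and its $g$-translate are tensor-independent; such $g$ exist since $\beta<\alpha$ and the $\Gamma_\gamma$ are non-trivial.

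The heart of the argument, and the device that makes the $C^\ast$-statement work \emph{in norm}, is a slice map. Let $E_{G_\beta}\colon N\to B\rc G_\beta$ be the canonical contractive conditional expectation compressing the Fourier support to $G_\beta$, and let $S\colon B\rc G_\beta\to N_\beta$ apply the product state $\bigotimes_{G\setminus G_\beta}\varphi_0$ to the coordinates outside $G_\beta$. Both are contractive, $F:=S\circ E_{G_\beta}$ is the identity on $N_\beta$, and $F\bigl(u_g(\,\cdot\,)u_g^\ast\bigr)$ sends the base entries of a conjugate — which after conjugation live on the independent set $gG_\beta$ — to scalars. Applying the contraction $F$ to $\|u_gx_nu_g^\ast-x_n\|\to0$ (for a fixed $g\in G\setminus G_\beta$) therefore yields $\|x_n-v_n\|\to0$ with $v_n=\sum_j\mu_{n,j}u_j\in\rg(G_\beta)$ a \emph{group-algebra} element (scalar coefficients $\mu_{n,j}=\psi(x_n(g^{-1}jg))$). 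In the von Neumann case the same reduction is cheaper: factorising the product state across $G_\beta$ and $gG_\beta$ gives $\omega(y_n^\ast u_gy_nu_g^\ast)=\sum_j\overline{\psi(y_n(j))}\,\psi(y_n(g^{-1}jg))$, whence by Cauchy--Schwarz $\omega(y_n^\ast u_gy_nu_g^\ast)\le\sum_{j\in G_\beta}|\psi(y_n(j))|^2\le\omega(y_n^\ast y_n)$; since $\omega(y_n^\ast u_gy_nu_g^\ast)\to\omega(y_n^\ast y_n)$, this squeezes $\sum_{j\in G_\beta}\|y_n(j)-\psi(y_n(j))\|_\psi^2\to0$, so $y_n$ is $\|\cdot\|_\omega$-close to $v_n\in L(G_\beta)$.

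It then remains to kill $\mu_{n,j}$ for $j\neq e$, for which I would use commutation with the base. Since left translation is free, for $j\neq e$ there is a non-scalar $b\in B$ at the coordinate $e$ with $\sigma_j(b)\neq b$, and $\mu_{n,j}(b-\sigma_j(b))=E_0\bigl([v_n,b]u_j^\ast\bigr)\to0$ forces $\mu_{n,j}\to0$. To pass from this coefficientwise vanishing to genuine triviality I would re-run the slice (resp.\ variance) step on $v_n$ itself: when the $\Gamma_\gamma$ are torsion-free, Lemma \ref{Lem:disjoint} gives $G_\beta\cap gG_\beta g^{-1}=\{e\}$, which collapses $v_n$ to $\mu_{n,e}1$ in one stroke; in the presence of torsion one iterates over several $g$, using that every non-trivial $j\in G_\beta$ can be conjugated out of $G_\beta$ by a single-lamp element of $G_{\beta+1}$, so that the relevant supports intersect in $\{e\}$. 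Setting $\lambda_n:=\omega(x_n)$, this yields $x_n-\lambda_n\to0$ in norm (resp.\ in $\|\cdot\|_\omega$); in the von Neumann case the upgrade to $\ast$-strong triviality is automatic for a \emph{central} sequence, since $\|(x_n-\lambda_n)a\xi_\omega\|\le\|a\|\,\|x_n-\lambda_n\|_\omega+\|[x_n,a]\xi_\omega\|\to0$ and likewise for adjoints.

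The main obstacle I anticipate is precisely this topology/uniformity issue rather than the algebra: a central sequence can have $\|\cdot\|_\omega$ small while its operator norm stays large, so the $C^\ast$-statement is not a formal consequence of the von Neumann one, and $\|x_n(j)\|\to0$ for each individual $j$ does not bound $\|\sum_j x_n(j)u_j\|$. The slice map $F$ is what resolves this: being contractive, it turns the coordinate-independence on $gG_\beta$ into an honest norm estimate, reducing the problem to the group algebra where the torsion-free disjointness of Lemma \ref{Lem:disjoint} (or its iterated substitute) converts ``each coefficient vanishes'' into ``the whole off-diagonal part vanishes.'' Throughout, the limit-ordinal hypothesis is used to guarantee $G_{\beta+1}\subseteq G$ and $G_\beta\subsetneq G$, and uncountable cofinality is used solely in the confinement step.
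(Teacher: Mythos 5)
Your proposal is correct and follows essentially the same route as the paper: confinement of the central sequence into $A \wrr{G_\beta} G_\beta$ via uncountable cofinality, followed by the contractive conditional expectation $E^{\varphi}_{G_\beta, G_\beta}$ (your slice map $F$) applied to a conjugate $u_g x_n u_g^\ast$. The only real difference is that the paper takes $g$ to be a single lamp in $\Gamma_\beta \subset G_{\beta+1}$ sitting at the coordinate $e$, for which $gG_\beta \cap G_\beta = \emptyset$ and $gG_\beta g^{-1} \cap G_\beta = \{e\}$ hold simultaneously and without any torsion-freeness hypothesis, so a single application of $F$ collapses $x_n$ to a scalar in norm and your intermediate base-commutation and iteration steps (the former of which would in any case fail for $A = \mathbb{C}$) are unnecessary.
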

\begin{proof}
We only prove (1).
The statement (2) can be shown by a similar argument.

Let $(x_n)_{n=1}^\infty$ be a central sequence in $A \wr G$.
Since $\alpha$ is of uncountable cofinality, one can find
$\beta < \alpha$ satisfying $\{ x_n : n \in \mathbb{N} \} \subset A \wr G_\beta$.
Fix a state $\varphi$ on $A$.
For any subgroup $H$ of $G$
and any left $H$-invariant subset $S$ of $G$,
by Exercise 4.1.4 in \cite{BO},
one can find a conditional expectation
$E_{H, S}^{\varphi} \colon A \wr G \rightarrow A \wrr{S} H$ satisfying
\[E_{H, S}^{\varphi} (x u_g) = \Phi_S(x)\chi_H(g)u_g {\rm~for~} x\in \bigotimes_G A, g\in G,\]
where $\Phi_S:= \left(\bigotimes_S \id_{A}\right) \otimes (\bigotimes_{G\setminus S}\varphi)$ and $\chi_H$ denotes the characteristic function of $H$ on $G$.
Observe that for two subgroups $H_1, H_2 \subset G$
and left $H_i$-invariant subsets $S_i \subset G$; $i=1, 2$,
we have
\[E_{H_1, S_1}^{\varphi} (A \wrr{S_2} H_2) = A \wrr{S_1 \cap S_2} (H_1 \cap H_2).\] 

Choose $g\in \Gamma_{\beta} \setminus \{e\}$.
We identify $\Gamma_{\beta}$ with the $e$-th direct summand
of $\bigoplus_{ G_\beta} \Gamma_{\beta} \subset G_{\beta +1}$.
Obviously $g G_\beta g^{-1} \cap G_\beta = \{ e\}$,
$g G_\beta \cap G_\beta = \emptyset$.
Since $u_g x_n u_g^\ast \in A \wrr{g G_\beta}(g G_\beta g^{-1})$,
we have $c_n:=E_{G_\beta, G_\beta}^{\varphi} (u_g x_n u_g^\ast) \in \mathbb{C}$.
Since $E_{G_\beta, G_\beta}^{\varphi} (x_n)=x_n$ for all $n$
and $\lim_{n \rightarrow \infty} \| x_n - u_g x_n u_g^\ast\| =0$,
we conclude
$\lim_{n \rightarrow \infty} \| x_n - c_n\| =0.$
\end{proof}
\begin{Rem}
The proof of Theorem \ref{Thm:nocentral} in fact shows the following slightly stronger statement:
for any iterated wreath product groups $G_1, \ldots, G_n$ as in Theorem \ref{Thm:nocentral}
and for any unital \Cs-algebras $A_1, \ldots, A_n$,
the tensor product
$\bigotimes_{i=1}^n ( A_i \wr G_i)$ has no non-trivial central sequences.
(The tensor products of conditional expectations used in the proof
of Theorem \ref{Thm:nocentral} play their role.)
A similar generalization is valid for von Neumann algebras.
\end{Rem}

We next give the following useful lemma on pure infiniteness of the reduced crossed product.
This is an immediate consequence of Lemma 3.2 of \cite{Kis}.
For the definition and consequences of pure infiniteness, we refer the reader to Chapter 4 of \cite{Ror02}.

Recall that an automorphism $\alpha$ of a \Cs-algebra $A$ is said to be {\it outer}
if there is no unitary element $u$ in the multiplier algebra $M(A)$
satisfying $\alpha(a)= uau^\ast$ for all $a\in A$.
An action $\alpha \colon \Gamma \curvearrowright A$ of a discrete group $\Gamma$ on a \Cs-algebra $A$ is said to be {\it outer} if
for any $s\in \Gamma \setminus \{e\}$, the automorphism $\alpha_s$ is outer.
\begin{Lem}\label{Lem:pi}
Let $\Gamma$ be a group.
Let $A$ be a simple purely infinite \Cs-algebra.
Let $\alpha \colon \Gamma \curvearrowright A$ be an outer action.
Then $A \rc \Gamma$ is simple and purely infinite.
\end{Lem}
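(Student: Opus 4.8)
The plan is to obtain both conclusions from the faithful canonical conditional expectation $E\colon B \to A$, where $B := A \rc \Gamma$, together with the averaging lemma (Lemma 3.2 of \cite{Kis}). Since $\alpha$ is outer and $A$ is simple, this lemma lets one approximately cut an arbitrary element of $B$ down to its diagonal part $E(\,\cdot\,)$ inside a prescribed hereditary \Cs-subalgebra of $A$: concretely, for every positive $y \in B$, every nonzero hereditary \Cs-subalgebra $D \subseteq A$, and every $\epsilon>0$, there is a positive contraction $h \in D$ with $\|hyh - hE(y)h\| < \epsilon$, and by choosing $D$ to live where $E(y)$ is large one may keep $\|hE(y)h\|$ as close to $\|E(y)\|$ as desired. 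The underlying mechanism is that outerness of $\alpha_g$ for $g\neq e$ forces the off-diagonal terms $a_g u_g$ to be annihilated by a conjugation-averaging.

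First I would settle simplicity. Let $I\subseteq B$ be a nonzero closed two-sided ideal and pick a positive $y\in I$; faithfulness of $E$ gives $E(y)\neq 0$. Applying the averaging lemma in its summation form (kill the off-diagonal terms, so that a finite average of conjugates of $y$ approximates $E(y)$) one shows $E(I)\subseteq I$, whence $I\cap A\neq 0$. Because conjugation by the canonical unitaries implements $\alpha$, the intersection $I\cap A$ is an $\alpha$-invariant ideal of $A$; simplicity of $A$ forces $I\cap A=A$. As $A$ generates $B$ as an ideal (its normalizing unitaries span a dense subspace), we conclude $I=B$.

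For pure infiniteness I would argue through Cuntz comparison, using that a simple \Cs-algebra (other than $\mathbb{C}$) is purely infinite precisely when $b\precsim a$ for all nonzero positive $a,b$. Fix nonzero positive $y,z\in B$ and normalize $\|E(y)\|=1$. Applying the localized averaging lemma with $D$ chosen where $E(y)$ is large yields a positive contraction $h\in A$ and a nonzero positive $c:=hE(y)h\in A$ with $\|hyh-c\|<\epsilon$. R{\o}rdam's perturbation lemma then gives $(c-\epsilon)_+\precsim hyh\precsim y$, and $d:=(c-\epsilon)_+$ is a nonzero positive element of $A$ for small $\epsilon$. Since $A$ is purely infinite, $d$ is properly infinite, a fact witnessed inside $M_2(A)\subseteq M_2(B)$, so $d$ remains properly infinite in $B$; simplicity of $B$ makes $d$ full. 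By Kirchberg--R{\o}rdam a properly infinite positive element Cuntz-dominates every positive element of the ideal it generates, which here is all of $B$, so $z\precsim d\precsim y$. This proves $B$ is purely infinite.

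The main obstacle is entirely absorbed into the cited averaging lemma: the content is to extract, from outerness of $\alpha$ and simplicity of $A$, a cutting-down element $h$ that simultaneously annihilates the off-diagonal terms and preserves the size of the diagonal part $E(y)$ within a chosen hereditary subalgebra. Once this is granted, the remaining steps are routine Cuntz-semigroup bookkeeping: R{\o}rdam's lemma, persistence of proper infiniteness under the inclusion $A\subseteq B$, and the Kirchberg--R{\o}rdam maximality of full properly infinite elements. This is why the result is indeed an immediate consequence of Lemma 3.2 of \cite{Kis}.
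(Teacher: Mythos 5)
Your proof is correct, and it rests on the same key technical input as the paper's: Kishimoto's Lemma 3.2, which produces a positive contraction in $A$ that kills the off-diagonal terms $a_g u_g$ ($g\neq e$) while preserving the norm of the diagonal part. Where you diverge is in the packaging. The paper proves a single quantitative claim that delivers both conclusions at once: for any $x=\sum_{g\in F}a_gu_g$ with $a_e\geq 0$, $\|a_e\|>1$, any positive norm-one $b\in A$, and any $\epsilon>0$, there is a contraction $c\in A$ with $\|c^\ast x c-b\|<\epsilon$; here the first cut-down $d_1$ comes from Kishimoto and the second cut-down $d_2$ comes directly from the standard characterization of simple purely infinite algebras. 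You instead separate the two conclusions and route pure infiniteness through the Cuntz semigroup: R{\o}rdam's perturbation lemma to get a nonzero positive $d\in A$ with $d\precsim y$, persistence of proper infiniteness of $d$ under the inclusion $A\subset B$, and the Kirchberg--R{\o}rdam fact that a full properly infinite element dominates every positive element of the ideal it generates. Both arguments are standard and sound; the paper's is shorter and self-contained modulo Kishimoto, while yours invokes heavier (but also standard) comparison machinery and has the mild virtue of isolating the simplicity step and making explicit where fullness and proper infiniteness enter. One small caveat: your parenthetical claim that averaging shows $E(I)\subseteq I$ is slightly more than the single-cutting-element form of Kishimoto's lemma gives; what you actually need and do get is $I\cap A\neq\{0\}$ (e.g.\ by noting the quotient map would otherwise be isometric on $A$), which suffices.
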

\begin{proof}
It suffices to show the following claim.
For any $x = \sum_{g\in F} a_g u_g \in A \rc \Gamma$; $a_g\in A$, $e\in F \subset \Gamma$ finite subset,
with $a_e \geq 0$, $\| a_e\| >1$, for any positive element $b\in A$ of norm one, and for any $\epsilon>0$,
there is a contractive element $c\in A$ with
$\| c^\ast x c -b\| < \epsilon$.

By Lemma 3.2 of \cite{Kis},
one can choose a positive element $d_1 \in A$
satisfying $\|d_1 \| =1$,
$\| d_1 a_e d_1 \| \geq 1$,
and $ \sum _{g \in F \setminus \{e \}}\| d_1 a_g \alpha_g(d_1)\| < \epsilon/2$.
This implies
$\| d_1xd_1 - d_1 a_e d_1 \| <\epsilon/2$.
Since $A$ is simple and purely infinite,
there is $d_2\in A$
satisfying $\| d_2^\ast d_1 a_e d_1 d_2 - b \| <\epsilon/2$
and $\| d_2 \| \leq 1$.
Combining these inequalities,
we obtain
$\| d_2^\ast d_1 x d_1 d_2 -b \| < \epsilon$.
Thus $c:= d_1 d_2$ gives the desired element.
\end{proof}

The following corollary in particular shows that Kirchberg's $\mathcal{O}_\infty$-absorption theorem (\cite{KP}, Theorem 3.15)
fails in the non-separable setting.

\begin{Cor}\label{Cor:NSKT}
For any cardinal $\kappa > \aleph_0$,
there are operator algebras of density character $\kappa$ with no non-trivial central sequences in the following classes.
\begin{enumerate}\upshape
\item
Simple unital monotracial AF-algebras,
\item
AFD type II$_1$ factors,
\item
AFD type III factors,
\item
simple unital nuclear purely infinite \Cs-algebras.
\end{enumerate}
\end{Cor}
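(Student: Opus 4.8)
The plan is to produce all four algebras from a single group $G$, by feeding suitable coefficient algebras into the wreath product constructions of Section~\ref{section:central} and reading off the absence of non-trivial central sequences directly from Theorem~\ref{Thm:nocentral}. Fix a cardinal $\kappa > \aleph_0$, so $\kappa \geq \aleph_1$. First I would build $G$ as the iterated wreath product of a family $(\Gamma_\beta)_{\beta < \omega_1}$ of non-trivial \emph{locally finite} groups, each of cardinality $\kappa$ (for instance $\Gamma_\beta = \bigoplus_\kappa \mathbb{Z}/2\mathbb{Z}$). By Lemma~\ref{Lem:lf} the group $G$ is again locally finite, hence amenable, by Lemma~\ref{Lem:ICC} it is ICC, and a transfinite cardinality count (a union of $\aleph_1$-many sets of size $\kappa$ has size $\kappa$) gives $\sharp G = \kappa$. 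Since $\omega_1$ is a limit ordinal of uncountable cofinality, Theorem~\ref{Thm:nocentral} applies verbatim to $A \wr G$ and to $(M,\varphi) \vwr G$ for any unital coefficient algebra, so in every case below the resulting algebra has no non-trivial central sequences. Moreover its density character equals $\kappa$: this follows from Proposition~\ref{Prop:dc}(3) in the von Neumann setting, and in the \Cs-setting from the analogous cardinality count for the dense $\ast$-subalgebra generated by $\{u_g : g \in G\}$ together with a dense subset of the separable base tensor product.

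The finite and AF cases exploit that $G$ is locally finite. Writing $G = \bigcup_\lambda F_\lambda$ as a directed union of finite subgroups and taking finite $F_\lambda$-invariant subsets $S \subset G$ (finite unions of $F_\lambda$-orbits, which are finite by left translation), the subalgebras $(\bigotimes_S A) \rc F_\lambda$, respectively $(\bigvotimes_S M) \vnc F_\lambda$, are crossed products of finite dimensional algebras by finite groups, hence finite dimensional, and their union is norm (resp.\ strongly) dense. Thus for item~(1) I would take $A = \mathbb{M}_k(\mathbb{C})$: then $A \wr G$ is AF by the above, unital, simple because the Bernoulli shift of $G$ on the simple algebra $\bigotimes_G A$ is outer (left translation of $G$ on itself is free) and $\bigotimes_G A$ is simple, and monotracial because the ICC property of $G$ forces any trace to vanish off the diagonal, so the unique $G$-invariant trace on $\bigotimes_G A$ extends uniquely. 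For item~(2) I would take $M = \mathbb{M}_k(\mathbb{C})$ with its tracial state; then $(M,\varphi) \vwr G$ is AFD by the same finite dimensional approximation and carries the faithful trace induced by the product trace, and it is a factor because a non-scalar central element would furnish a non-trivial constant central sequence, contradicting Theorem~\ref{Thm:nocentral} — so, being an infinite dimensional finite factor, it is type II$_1$.

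For item~(3) I would keep $M = \mathbb{M}_2(\mathbb{C})$ but equip it with a \emph{non-tracial} faithful state $\varphi$ of Powers type, so that $\bigvotimes_G(M,\varphi)$ is a Powers factor of type III$_\lambda$; the finite dimensional approximation above again yields AFD-ness of $(M,\varphi)\vwr G$, and triviality of the center (hence factoriality) follows exactly as in item~(2) from Theorem~\ref{Thm:nocentral}. For item~(4) I would take $A = \mathcal{O}_2$: the Bernoulli shift of $G$ on $\bigotimes_G A$ is outer, so Lemma~\ref{Lem:pi} shows $A \wr G$ is simple and purely infinite, while amenability of $G$ and nuclearity of $\mathcal{O}_2$ make it nuclear and unital.

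The main obstacle is establishing that the crossed product in item~(3) is genuinely type III rather than type II$_\infty$ or type I$_\infty$. Since $\varphi$ is $G$-invariant, the product state $\psi = \bigvotimes_G \varphi$ extends to a faithful normal state $\hat\psi$ on $(M,\varphi)\vwr G$ whose modular flow restricts to the modular flow $\sigma^\psi$ of $\psi$ on $\bigvotimes_G M$ and fixes each $u_g$; passing to the continuous core then identifies it with $\bigl(\bigvotimes_G(M,\varphi) \rtimes_{\sigma^\psi} \mathbb{R}\bigr) \vnc G$, and one checks that the shift action leaves the flow of weights of the type III$_\lambda$ factor $\bigvotimes_G(M,\varphi)$ invariant, so that the flow of weights, and hence the type III$_\lambda$ classification, is inherited by the crossed product. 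I expect this modular-theoretic bookkeeping, rather than the routine approximation, simplicity, and pure infiniteness arguments, to require the most care.
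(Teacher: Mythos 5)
Your overall architecture coincides with the paper's: the same iterated wreath product group over $\omega_1$ built from locally finite groups of total cardinality $\kappa$ (the paper puts the cardinality $\kappa$ only into $\Gamma_0$, but your choice works equally well), the same coefficient algebras, the same finite-dimensional exhaustion by crossed products over finite subgroups, and the absence of non-trivial central sequences read off from Theorem~\ref{Thm:nocentral}. However, two steps are not sound as written. The first is monotraciality in item (1): the claim that ``the ICC property of $G$ forces any trace to vanish off the diagonal'' is not a valid implication. Indeed $G$ is amenable, so $\rg(G)=\fg(G)$ admits the trivial character, a tracial state on the reduced \Cs-algebra of an ICC group that does not vanish off $e$; ICC only controls the normal traces of $L(G)$. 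The vanishing of $\tau(au_g)$ for $g\neq e$ on $\left(\bigotimes_G\mathbb{M}_k(\mathbb{C})\right)\rc G$ must come from the coefficient algebra, i.e., from the freeness of the shift on the tensor coordinates, and this requires an actual argument (e.g., averaging over trace-zero unitaries placed in tensor coordinates disjoint from the support of $a$ and from its $g$-translate). The paper handles this by realizing $A$ as the increasing union of the subalgebras $\left(\bigotimes_G\mathbb{M}_2(\mathbb{C})\right)\rc H_\lambda$ over finite subgroups $H_\lambda$ and invoking Proposition 2.1 of \cite{ORS}; you need to supply such an argument or citation.

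The second problem is the type III case, which you yourself flag as the delicate point. The continuous-core and flow-of-weights computation you sketch is both unnecessary and incomplete at its key step: knowing that the shift fixes the flow of weights of the Powers factor does not by itself identify the center of the core of the crossed product (one would still need the induced action on the core to be properly outer relative to its center). None of this is needed, since the statement only asks for type III, not for the type III$_\lambda$ classification. Because $(\mathbb{M}_2(\mathbb{C}),\varphi)\vwr G$ contains the type III Powers factor $\bigvotimes_G(\mathbb{M}_2(\mathbb{C}),\varphi)$ as the range of a faithful normal conditional expectation, it cannot be semifinite (a faithful normal semifinite trace would induce one on the Powers factor); having already obtained factoriality from Theorem~\ref{Thm:nocentral}, it is type III. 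This shortcut is what the paper's reference to Exercise 13.4.2 of \cite{KR} accomplishes. The remaining steps of your proposal (simplicity via outerness of the Bernoulli shift and Kishimoto's theorem, pure infiniteness via Lemma~\ref{Lem:pi}, nuclearity, the AFD/AF exhaustions, and the density character count) match the paper and are fine.
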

\begin{proof}
(1): 
Choose a family $( \Gamma_\alpha)_{\alpha< \omega_1}$
of locally finite non-trivial groups
satisfying $\sharp \Gamma_0= \kappa$ (e.g., $\Gamma_0 := \bigoplus_\kappa \mathbb{Z}_2$) and $\sharp \Gamma_\alpha \leq \aleph_0$ for $0<\alpha <\omega_1$.
Let $G$ be the iterated wreath product group of the family $(\Gamma_\alpha)_{\alpha< \omega_1}$.
By Lemma \ref{Lem:lf}, $G$ is locally finite.

Set $A:=\mathbb{M}_2(\mathbb{C}) \wr G$.
Clearly $A$ is unital.
By Theorem 3.1 of \cite{Kis}, $A$ is simple.
It follows from the construction of $G$ that $\sharp G= \kappa$.
Hence the density character of $A$ is $\kappa$.
Take an increasing net $(H_\lambda)_{\lambda \in \Lambda}$
of finite subgroups of $G$ with $\bigcup_{\lambda \in \Lambda} H_\lambda =G$.
Then the net $(\mathbb{M}_2(\mathbb{C}) \wr H_{\lambda})_{\lambda \in \Lambda}$ shows that $A$ is an AF-algebra.
We also observe that $A$ is realized as an increasing union
of the net $(\mathbb{M}_2(\mathbb{C}) \wr_G H_{\lambda})_{\lambda \in \Lambda}$.
Hence Proposition 2.1 in \cite{ORS} yields
the uniqueness of tracial states on $A$.
By Theorem \ref{Thm:nocentral}, $A$ has no non-trivial central sequences.

(2): Let $G$ be as in the proof of (1).
Put $M:=(\mathbb{M}_2(\mathbb{C}), \tau)\vwr G$.
It follows from Theorem \ref{Thm:nocentral}
that $M$ has no non-trivial central sequences.
In particular $M$ is a factor.
By the same reason as in (1),
$M$ possesses the desired properties.

(3): Again let $G$ be as in the proof of (1). Take a faithful non-tracial state $\varphi$ on $\mathbb{M}_2(\mathbb{C})$.
Then, by Exercise 13.4.2 in \cite{KR},
$(M, \varphi) \vwr G$ is of type III.
The other required conditions are confirmed by the same way as (2).

(4):
Choose a family $( \Gamma_\alpha)_{\alpha< \omega_1}$
of amenable groups
satisfying $\sharp \Gamma_0= \kappa$ and $\sharp \Gamma_\alpha = \aleph_0$ for $0<\alpha <\omega_1$.
Let $G$ be the iterated wreath product group of the family $(\Gamma_\alpha)_{\alpha< \omega_1}$.
Fix a Kirchberg algebra $B$.
Set $A:=B \wr G$.
Clearly $A$ is unital.
By Theorem 4.2.6 in \cite{BO}, $A$ is nuclear.
By Lemma \ref{Lem:pi}, $A$ is simple and purely infinite.
Since $\sharp G= \kappa$, the density character of $A$ is $\kappa$.
Again by Theorem \ref{Thm:nocentral}, $A$ has no non-trivial central sequences.
\end{proof}
\begin{Rem}\label{Rem:Cartan}
In contrast to the absence of regular abelian $\ast$-subalgebras in Theorem \ref{Thmint:prime}, one can arrange \Cs-algebras stated in Corollary \ref{Cor:NSKT}
to be a crossed product of a group acting on an abelian operator algebra.
Indeed, for (4), choose $B$ in the proof to be a Kirchberg algebra of the form
$C(X) \rc \Gamma$ for some amenable minimal topologically free dynamical system $\Gamma \curvearrowright X$
(see e.g., \cite{Suz13}).
Let $G$ denote the iterated wreath product group as in the proof of Corollary \ref{Cor:NSKT}.
We equip $X^G$ with the action of $\Gamma \wr G$ defined as follows.
The group $\bigoplus_G \Gamma$ acts on $ X^G$ coordinate-wise
and $G$ acts on $X^G$ by the left shifts.
We then obtain an isomorphism
\[A:=B \wr G \cong C(X^G) \rc (\Gamma \wr G).\]
We point out that the action $\Gamma \wr G \curvearrowright X^G$ is minimal
and topologically free. Thus
$C(X^G)$ in fact gives rise a {\it Cartan subalgebra} \cite{Ren08} of $A$.
A similar remark applies to operator algebras constructed in the proofs
of (1) to (3) of Theorem \ref{Thm:nocentral}.
(To see this, note that $\mathbb{M}_2(\mathbb{C}) \cong C(\mathbb{Z}_2) \rc \mathbb{Z}_2$ where
$\mathbb{Z}_2$ acts on itself by the left translations.)
\end{Rem}
\begin{Rem}
Recall that the cofinality of a limit ordinal $\alpha$ is
the cardinal \[{\rm cf}(\alpha):=\min\{\sharp S:S \subset \alpha: \sup(S) =\alpha\}.\]
By slightly modifying the proofs of Theorem \ref{Thm:nocentral} and Corollary \ref{Cor:NSKT} (replacing $\omega_1$ by $\alpha$),
one can show the following generalized statement:
for any limit ordinal $\alpha$,
one can construct operator algebras of density character $\sharp \alpha$ from each classes (1) to (4) of Corollary \ref{Cor:NSKT} with the following property.
There is no non-trivial central net $(x_i)_{i \in I}$ with $\sharp I< {\rm cf}(\alpha)$. (Here centrality and triviality of a bounded net is defined analogously to the sequence case.)
Note that for \Cs-algebras, this condition implies the non-existence of non-trivial tensor product components
of density character less than ${\rm cf}(\alpha)$.
We point out that when $\kappa$ is a successor (infinite) cardinal,
then ${\rm cf}(\kappa)$ is equal to $\kappa$.
In general, for any cardinals $\lambda < \kappa$,
one can find an ordinal $\alpha$ with
$\sharp\alpha=\kappa$, ${\rm cf}(\alpha)>\lambda$.
To see this, choose a successor cardinal $\mu$
with $\lambda < \mu \leq \kappa$.
Then the ordinal $\alpha:= \kappa + \mu$ possesses the desired property.
\end{Rem}
\section{Rigid sides of simple \Cs-algebras of decomposition rank one}\label{section:dr1}
We close this article by giving examples of simple unital monotracial \Cs-algebras which are tensorially prime and have decomposition rank one. This is another application of Popa's orthogonality method \cite{Pop83}.
\begin{Thm}\label{Thm:dr1}
For any cardinal $\kappa > \aleph_0$,
there is a tesorially prime simple unital monotracial \Cs-algebra of decomposition rank one, of density character $\kappa$, with no non-trivial central sequences.
\end{Thm}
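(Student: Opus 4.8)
The plan is to combine the central-sequence machinery of Section~\ref{section:central} with Popa's orthogonality method, exactly as in Section~\ref{section:prime}, but applied to a noncommutative Bernoulli shift whose base algebra forces decomposition rank one rather than an AF-structure. Recall from Examples~6.1 in \cite{KW} that AF-algebras have decomposition rank zero; to push the rank up to one, the natural choice of building block is the Jiang--Su algebra $\mathcal{Z}$, which is simple, unital, monotracial, and has decomposition rank one. So I would fix an iterated wreath product group $G$ of a family $(\Gamma_\beta)_{\beta<\omega_1}$ of non-trivial \emph{torsion-free} countable groups, with $\sharp\Gamma_0=\kappa$ and $\sharp\Gamma_\alpha\le\aleph_0$ for $0<\alpha<\omega_1$ (so that $\sharp G=\kappa$ and $G$ is ICC by Lemma~\ref{Lem:ICC}), and set $A:=\mathcal{Z}\wr G$.

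\textbf{The four required properties.} First, simplicity: since $\mathcal{Z}$ is simple and unital and the shift action of $G$ on $\bigotimes_G\mathcal{Z}$ is outer on the simple algebra $\bigotimes_G\mathcal{Z}$, simplicity of the reduced crossed product follows from the standard outerness criterion (as invoked via Theorem~3.1 of \cite{Kis} in the proof of Corollary~\ref{Cor:NSKT}). Second, no non-trivial central sequences: this is immediate from Theorem~\ref{Thm:nocentral}(1) with $A=\mathcal{Z}$, using that $\omega_1$ is of uncountable cofinality. Monotraciality should follow by the same argument as in Corollary~\ref{Cor:NSKT}(1): realize $A$ as an increasing union of $\mathcal{Z}\wr_G H_\lambda$ over finite subgroups $H_\lambda$ (here the $\Gamma_\beta$ need not be finite, but one still filters $G$ by finitely generated, hence by suitable, subgroups), and apply the uniqueness-of-traces result Proposition~2.1 in \cite{ORS}, using that $\mathcal{Z}$ is monotracial. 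The density character is $\kappa$ since $\sharp G=\kappa$.

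\textbf{Decomposition rank one.} This is where the two upper bounds meet. The inequality $\mathrm{dr}(A)\le 1$ is the delicate point: I expect it to follow from permanence properties of decomposition rank under the tensor/crossed-product construction, using that $\mathrm{dr}(\mathcal{Z})=1$ and $G$ is (locally finite, or at least built from amenable pieces, hence) amenable so that the crossed product by the shift is well-behaved; one realizes $A$ as an inductive limit of crossed products of $\bigotimes_F\mathcal{Z}$ by finite subgroups, each of decomposition rank one, and invokes lower semicontinuity of $\mathrm{dr}$ under inductive limits. The lower bound $\mathrm{dr}(A)\ge 1$, equivalently $\mathrm{dr}(A)\ne 0$, follows because decomposition rank zero is equivalent to being AF, and $A$ cannot be AF: an AF-algebra has plenty of non-trivial central sequences, contradicting Theorem~\ref{Thm:nocentral}. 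Alternatively one argues directly that $A$ is not stably finite-dimensionally approximated in the rank-zero sense because $\mathcal{Z}$ is not AF.

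\textbf{Tensorial primeness.} Finally, for tensorial primeness I would again apply Popa's orthogonality method. Pass to the trace GNS-completion: since $A$ is simple, unital and monotracial, $\pi_\tau(A)''$ is a type~II$_1$ factor, and any tensor decomposition of $A$ with both factors infinite-dimensional would induce a corresponding decomposition at the von Neumann level. The base algebra $\bigotimes_G\mathcal{Z}$ completes to the hyperfinite II$_1$ factor tensored appropriately, and the disjointness Lemma~\ref{Lem:disjoint} together with Proposition~4.1 in \cite{Pop83} controls normalizers of diffuse subalgebras exactly as in the proof of Proposition~\ref{Prop:groupvn}; the torsion-freeness of the $\Gamma_\beta$ is precisely what makes Lemma~\ref{Lem:disjoint} applicable. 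This shows $\pi_\tau(A)''$ is a prime II$_1$ factor, and since $A$ is a strongly dense \Cs-subalgebra of a prime type~II$_1$ factor, the remark in Section~\ref{section:pre} on tensorial primeness of such subalgebras gives that $A$ is tensorially prime.

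\textbf{Main obstacle.} The hard part will be pinning down the decomposition rank exactly at one: the upper bound requires a permanence argument for $\mathrm{dr}$ through the specific wreath-type crossed product by a non-finite group, and one must verify that the chosen filtration by subgroups genuinely exhibits $A$ as an inductive limit of rank-one building blocks without inflating the rank. Coordinating the torsion-free requirement (needed for primeness via Lemma~\ref{Lem:disjoint}) with the locally finite/amenable structure used for simplicity and monotraciality also needs care, since torsion-free non-trivial groups are never finite; the filtration must therefore be by finitely generated subgroups and one must confirm the relevant approximation still holds.
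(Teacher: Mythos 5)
Your overall architecture --- a Bernoulli-type crossed product over an iterated wreath product group, Theorem \ref{Thm:nocentral} for the absence of central sequences, and Popa's orthogonality method applied to the tracial GNS completion for tensorial primeness --- matches the paper (which, however, takes $\mathbb{M}_2(\mathbb{C})$ rather than $\mathcal{Z}$ as the base algebra). The genuine gaps are both located in the decomposition rank computation, which you yourself flag as the main obstacle but do not actually close.

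For the lower bound, your principal argument --- that $A$ cannot be AF because ``an AF-algebra has plenty of non-trivial central sequences, contradicting Theorem \ref{Thm:nocentral}'' --- is false in the non-separable setting and in fact contradicts Corollary \ref{Cor:NSKT}(1) of this very paper, which produces simple unital AF-algebras with no non-trivial central sequences; so no contradiction arises. The paper instead rules out decomposition rank zero K-theoretically: it places $\Gamma_0=\mathbb{Z}$ at the bottom of the construction precisely so that there is a normal subgroup $H$ of $G$ with $G/H\cong\mathbb{Z}$, writes $\mathbb{M}_2(\mathbb{C})\wr G$ as a crossed product by $\mathbb{Z}$, and applies the Pimsner--Voiculescu sequence \cite{PV} to get $K_1\neq 0$, which is incompatible with being AF (Examples 6.1 (i) of \cite{KW}). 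Your fallback remark that ``$\mathcal{Z}$ is not AF'' is not an argument, since non-AF-ness of the base does not obviously pass to the crossed product; some obstruction such as $K_1\neq 0$ is still needed, and with base $\mathcal{Z}$ one would have to produce it separately.

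For the upper bound, a non-trivial torsion-free group has no non-trivial finite subgroups, so the filtration by finite subgroups you propose does not exist; and once you pass to countable (or finitely generated infinite) subgroups, the building blocks are crossed products by infinite amenable groups, for which there is no naive permanence property bounding the decomposition rank --- such crossed products can a priori have large or infinite decomposition rank. The paper must invoke a chain of deep results here: Theorem 2.1 of \cite{ORS} and Theorem 1.1 of \cite{Sat} to show that each countable-stage algebra $\mathbb{M}_2(\mathbb{C}) \wrr{K} L$ with $[K:L]=\infty$ is monotracial and $\mathcal{Z}$-stable, and then Theorem F of \cite{STW} together with Theorem 3.8 of \cite{ORS} to conclude that each such separable algebra has decomposition rank at most one; only then does behaviour of the decomposition rank under inductive limits finish the estimate. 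Nothing in your sketch supplies this input. (Two smaller points: your monotraciality argument likewise appeals to finite subgroups that do not exist in a torsion-free group, and your $\Gamma_0$ cannot simultaneously be countable and of cardinality $\kappa$.) The simplicity, central-sequence, density-character, and tensorial-primeness parts of your proposal are essentially the paper's arguments and are fine modulo these repairs.
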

\begin{proof}
Define $\Gamma_0 := \mathbb{Z}$, $\Gamma_1 := \bigoplus_\kappa \mathbb{Z}$,
and
$\Gamma_\alpha:=\mathbb{Z}$ for $2\leq \alpha<\omega_1$.
Let $G$ be the iterated wreath product group of the family $(\Gamma_\alpha)_{\alpha<\omega_1}$.
We show that $\mathbb{M}_2(\mathbb{C}) \wr G$ possesses the required properties.
We have already seen in the proof of Corollary \ref{Cor:NSKT} that $\mathbb{M}_2(\mathbb{C}) \wr G$ is simple, unital, monotracial, of density character $\kappa$, with no non-trivial central sequences.
To show that the decomposition rank of $\mathbb{M}_2(\mathbb{C}) \wr G$ is one,
as explained in Examples 6.1 (i) of \cite{KW}, it suffices to show that
$\mathbb{M}_2(\mathbb{C}) \wr G$ has nonzero K$_1$-group and has decomposition rank at most one.
To show the first claim, we define subgroups $H_\alpha \subset G_\alpha$, $\alpha \leq\omega_1$
as follows. Set $H_0:= \{e\}$, $H_1 := \{e \}$.
Assume we have defined $(H_\beta)_{\beta<\alpha}$ for some $1 < \alpha\leq\omega_1$.
We then define $H_\alpha$ as follows.
When $\alpha$ is a limit ordinal,
set $H_\alpha:= \bigcup_{\beta<\alpha} H_\beta$.
When $\alpha$ has the predecessor $\gamma$,
set $H_\alpha$ to be the subgroup of $G_\alpha$
generated by $\bigoplus_{G_\gamma}\Gamma_\gamma$ and $H_\gamma$.
By transfinite induction, we obtain the increasing family $(H_\alpha)_{\alpha\leq\omega_1}$ of subgroups in $G$.
We set $H:= H_{\omega_1}$.
It is not hard to show by transfinite induction that for each $1 \leq \alpha<\omega_1$, $H_\alpha$ is normal in $G_\alpha$,
$H_\alpha \cap G_1 = \{e\}$, and $H_\alpha \cup G_1$ generates $G_\alpha$.
This yields that $H$ is normal in $G$
and $G/H \cong G_1$.
Since $G_1$ is isomorphic to $\mathbb{Z}$,
we obtain the semidirect product decomposition
$G= H \rtimes G_1$.
This induces the crossed product decomposition
\[\mathbb{M}_2(\mathbb{C}) \wr G =A\rc G_1,\]
where $A:=\mathbb{M}_2(\mathbb{C}) \wrr{G} H$.
Since $A$ is unital and admits a tracial state,
the unit element $[1_A]_0$ is nonzero in $K_0(A)$.
The Pimsner--Voiculescu exact sequence \cite{PV}
then implies that $K_1(\mathbb{M}_2(\mathbb{C}) \wr G)$ is nonzero.
We next show the second claim.
By Theorem 2.1 of \cite{ORS} and Theorem 1.1 in \cite{Sat}, for each countable subgroups $L \subset K$ of $G$ with
$[K: L]=\infty$,
$\mathbb{M}_2(\mathbb{C}) \wr_K L$ is monotracial and $\mathcal{Z}$-stable.
By Theorem F of \cite{STW} and Theorem 3.8 of \cite{ORS}, each $\mathbb{M}_2(\mathbb{C}) \wr_K L$
has decomposition rank at most one.
This proves the second claim.

We next show the tensorial primeness of $\mathbb{M}_2(\mathbb{C}) \wr G$.
We identify $\mathbb{M}_2(\mathbb{C}) \wr G$
with a strongly dense \Cs-subalgebra of $(\mathbb{M}_2(\mathbb{C}), \tau) \vwr G$ in the canonical way.
To lead to a contradiction, assume we have a tensor product decomposition
\[\mathbb{M}_2(\mathbb{C}) \wr G = A_1 \otimes A_2\]
with both $A_i$ infinite dimensional.
By taking the strong closure, we obtain a tensor product decomposition 
\[(\mathbb{M}_2(\mathbb{C}), \tau) \vwr G =M_1 \votimes M_2,\]
where both $M_1, M_2$ are type II$_1$ factors.
We show that there is $\alpha< \omega_1$
satisfying $M_i \subset (\mathbb{M}_2(\mathbb{C}), \tau) \vwrr{G} G_\alpha$ for $i=1, 2$, which is obviously a contradiction.
To find such $\alpha$, choose a separable type II$_1$ subfactor
$N_i$ of $M_i$ for $i=1, 2$.
Take $\alpha<\omega_1$
satisfying $N_i \subset (\mathbb{M}_2(\mathbb{C}), \tau) \vwr G_\alpha$ for $i=1, 2$.
Let $U_1$, $U_2$ denote the unitary group
of the \Cs-subalgebras $\bigotimes _{G_\alpha} \mathbb{M}_2(\mathbb{C})$,
$\bigotimes _{G\setminus G_\alpha} \mathbb{M}_2(\mathbb{C})$ of $\bigvotimes _G(\mathbb{M}_2(\mathbb{C}), \tau)$ respectively.
For any $g\in G\setminus G_\alpha$, $v_1 \in U_1$, $v_2 \in U_2$, and $i=1, 2$,
we will show the relation
\[v_1 v_2 u_g N_i u_g^\ast v_2^\ast v_1^\ast \perp N_i.\]
Since the product $U_1 \cdot U_2$ spans a strongly dense subspace of $\bigvotimes _G(\mathbb{M}_2(\mathbb{C}), \tau)$,
this together with Corollary 2.6 of \cite{Pop83} concludes the desired inclusions
\[M_1 \subset \mathcal{N}_{(\mathbb{M}_2(\mathbb{C}), \tau) \vwr G}(N_2)''
\subset (\mathbb{M}_2(\mathbb{C}), \tau) \vwrr{G} G_\alpha,\]
\[M_2 \subset \mathcal{N}_{(\mathbb{M}_2(\mathbb{C}), \tau) \vwr G}(N_1)''
\subset (\mathbb{M}_2(\mathbb{C}), \tau) \vwrr{G} G_\alpha.\]
To prove the claim, as $U_1, N_1, N_2 \subset (\mathbb{M}_2(\mathbb{C}), \tau) \vwr G_\alpha$, it is enough to show
\[v_2 u_g [(\mathbb{M}_2(\mathbb{C}), \tau) \vwr G_\alpha] u_g^\ast v_2^\ast \perp [(\mathbb{M}_2(\mathbb{C}), \tau) \vwr G_\alpha]\]
for $g\in G \setminus G_\alpha$ and $v_2 \in U_2$.
To see this, it suffices to show the equation
\[\tau(v_2u_g xu_s u_g^\ast v_2^\ast y u_t) = \tau(v_2u_g x u_s u_g^\ast v_2^\ast)\tau(yu_t)\]
for $g\in G \setminus G_\alpha$, $v_2 \in U_2$, $x, y \in \bigotimes_{G_\alpha}\mathbb{M}_2(\mathbb{C})$, $s, t \in G_\alpha$.
When $\{s, t\} \neq \{e\}$,
by Lemma \ref{Lem:disjoint}, we have
\[\tau(v_2u_g xu_s u_g^\ast v_2^\ast y u_t) = 0=\tau(v_2u_g x u_s u_g^\ast v_2^\ast)\tau(yu_t).\]
We next consider the case $s= t=e$. In this case, observe that $v_2 u_g x u_g^\ast v_2^\ast \in \bigotimes_{G \setminus G_\alpha}\mathbb{M}_2(\mathbb{C})$,
while $y \in \bigotimes_{G_\alpha}\mathbb{M}_2(\mathbb{C})$.
We thus obtain
\[\tau(v_2u_g x u_g^\ast v_2^\ast y ) = \tau(v_2u_g x u_g^\ast v_2^\ast)\tau(y).\]
\end{proof}
\begin{Rem}
Let $G$ be the iterated wreath product of a family $(\Gamma_\alpha)_{\alpha<\omega_1}$,
where each $\Gamma_\alpha$ is countable infinite, amenable, and torsion-free.
Then, by the proof of Theorem \ref{Thm:dr1}, Proposition \ref{Prop:AFD}, and Remark \ref{Rem:Cartan}, $(\mathbb{M}_2(\mathbb{C}), \tau) \vwr G$ is
a prime AFD type II$_1$ factor with a regular maximal abelian $\ast$-subalgebra.
This construction also provides prime amenable type II$_1$ factors of
prescribed uncountable density character with a regular maximal abelian $\ast$-subalgebra. We do not know if these von Neumann algebras are AFD.
\end{Rem}
\subsection*{Acknowledgements}
Some parts of this work were carried out while the author was visiting Hokkaido University. The author is grateful to Reiji Tomatsu for his kind hospitality.
He also thanks Shuhei Masumoto for helpful comments on ordinals.
Finally, the author would like to thank the referees for their careful reading and valuable comments.
This work was supported by JSPS KAKENHI Grant-in-Aid for Young Scientists
(Start-up, No.~17H06737) and tenure track funds of Nagoya University.

\end{document}